\definecolor{indigo}{HTML}{492DA5}
\g@addto@macro\bfseries{\boldmath}\makeatother
\let\origsection\section
\renewcommand\section{\@ifstar{\starsection}{\nostarsection}}
\newcommand\sectionspace{\vspace{0.5ex}}
\newcommand\nostarsection[1]{\sectionspace\origsection{#1}\sectionspace}
\newcommand\starsection[1]{\sectionspace\origsection*{#1}\sectionspace}
\setlist[enumerate]{font=\normalfont}
\crefname{enumi}{}{}
\crefname{case}{case}{cases}
\crefname{page}{page}{pages}
\numberwithin{equation}{section}
\crefname{equation}{equation}{equations}
\crefname{inequality}{inequality}{inequalities}
\crefname{condition}{condition}{conditions}
\newtheorem{theorem}{Theorem}[section]
\newtheorem{thm}[theorem]{Theorem}
\crefname{thm}{Theorem}{Theorems}
\newtheorem{lemma}[theorem]{Lemma}
\crefname{lemma}{Lemma}{Lemmas}
\newtheorem{prop}[theorem]{Proposition}
\crefname{prop}{Proposition}{Propositions}
\newtheorem{cor}[theorem]{Corollary}
\crefname{cor}{Corollary}{Corollaries}
\theoremstyle{definition}
\crefname{defn}{Definition}{Definitions}
\newtheorem{remark}[theorem]{Remark}
\crefname{remark}{Remark}{Remarks}
\newtheorem{example}[theorem]{Example}
\crefname{example}{Example}{Examples}
\newcommand{\hl}[1]{\textcolor{magenta}{\emph{#1}}}
\newcommand{\case}[1]{\textbf{\Cref{#1}:}}
\newcommand{\C}{\mathbb{C}}
\newcommand{\F}{\mathbb{F}}
\newcommand{\N}{\mathbb{N}}
\newcommand{\GG}{\mathcal{G}}
\newcommand{\HH}{\mathcal{H}}
\newcommand{\OO}{\mathcal{O}}
\newcommand{\PG}{\mathcal{P}_\GG}
\newcommand{\GGo}{\GG^{(0)}}
\newcommand{\GGc}{\GG^{(2)}}
\newcommand{\Bco}[1]{B^\mathrm{co}(#1)}
\newcommand{\BcoG}{\Bco{\GG}}
\newcommand{\supp}{\operatorname{supp}}
\newcommand{\vecspan}{\operatorname{span}}
\newcommand{\Iso}{\operatorname{Iso}}
\newcommand{\restr}[1]{\ensuremath{\vert_{#1}}}
\newcommand{\maxnorm}[1]{\ensuremath{\norm{#1}_{\mathrm{max}}}}
\newcommand{\rnorm}[1]{\ensuremath{\norm{#1}_r}}
\DeclarePairedDelimiter{\ceil}{\lceil}{\rceil}
\date{\today}
\title[Representing topological full groups in Steinberg algebras \& C*-algebras]{Representing topological full groups in Steinberg algebras and C*-algebras}
\author[Armstrong]{Becky Armstrong}
\author[Clark]{Lisa Orloff Clark}
\author[Ghandehari]{Mahya Ghandehari}
\author[Kang]{Eun Ji Kang}
\author[Yang]{Dilian Yang}
\address[B.~Armstrong and L.O.~Clark]{School of Mathematics and Statistics, Victoria University of Wellington, PO Box 600, Wellington 6140, NEW ZEALAND}
\email[B.~Armstrong]{\href{mailto:becky.armstrong@vuw.ac.nz}{becky.armstrong@vuw.ac.nz}}
\email[L.O.~Clark]{\href{mailto:lisa.orloffclark@vuw.ac.nz}{lisa.orloffclark@vuw.ac.nz}}
\address[M.~Ghandehari]{Department of Mathematical Sciences, University of Delaware, New\-ark, DE, 19716, USA}
\email{\href{mailto:mahya@udel.edu}{mahya@udel.edu}}
\address[E.J.~Kang]{Research Institute of Mathematics, Seoul National University, Seoul 08826, KOREA}
\email{\href{mailto:kkang3333@gmail.com}{kkang3333@gmail.com}}
\address[D.~Yang]{Department of Mathematics and Statistics, University of Windsor, Windsor, ON N9B 3P4, CANADA}
\email{\href{mailto:dyang@uwindsor.ca}{dyang@uwindsor.ca}}
\subjclass[2020]{Primary: 16S99, 46L05. Secondary: 22A22, 18B40, 20F38.}
\keywords{Groupoid, topological full group, C*-algebra, Steinberg algebra.}
\thanks{This research collaboration began as part of the project-oriented workshop ``Women in Operator Algebras~II'' (21w5199), funded by the Banff International Research Station. The research was also supported by the Marsden Fund of the Royal Society of New Zealand (grant number 21-VUW-156); the Deutsche Forschungsgemeinschaft (DFG, German Research Foundation) under Germany's Excellence Strategy -- EXC 2044 -- 390685587, Mathematics M\"unster -- Dynamics -- Geometry -- Structure; the Deutsche Forschungsgemeinschaft (DFG, German Research Foundation) -- Project-ID 427320536 – SFB 1442; ERC Advanced Grant 834267 -- AMAREC; NSF Grant DMS-1902301; NSERC Discovery Grants 808235 and 823065; RS-2023-00238961; and NRF-2022M3H3A1098237. The authors would like to thank Owen Tanner for helpful suggestions on a draft of this paper; Eduardo Scarparo for insightful comments on the first preprint; and the anonymous referee for their careful reading, quick response, and useful feedback.}
\begin{document}

\begin{abstract}
We study the natural representation of the topological full group of an ample Hausdorff groupoid in the groupoid's complex Steinberg algebra and in its full and reduced C*-algebras. We characterise precisely when this representation is injective and show that it is rarely surjective. We then restrict our attention to discrete groupoids, which provide unexpected insight into the behaviour of the representation of the topological full group in the full and reduced groupoid C*-algebras. We show that the image of the representation is not dense in the full groupoid C*-algebra unless the groupoid is a group, and we provide an example showing that the image of the representation may still be dense in the reduced groupoid C*-algebra even when the groupoid is not a group.
\end{abstract}

\maketitle

\section{Introduction}
Topological full groups of ample Hausdorff groupoids were introduced by Matui \cite{Matui2012} as a generalisation of the topological full groups studied by Giordano, Putnam, and Skau in the context of Cantor minimal systems \cite{GPS1999}. Matui showed in \cite[Theorem~3.10]{Matui2015} that for any two minimal effective Hausdorff \'etale groupoids whose unit spaces are Cantor sets, the groupoids are isomorphic if and only if their topological full groups are isomorphic. This is equivalent to there being a diagonal-preserving isomorphism of the Steinberg algebras of the groupoids; see \cite[Theorem~3.1]{ABHS2017}. It is therefore clear that there are strong connections between the topological full groups and Steinberg algebras of ample Hausdorff groupoids.

In addition to being a groupoid invariant, topological full groups have enticing connections to some infamous open questions. For example, they give presentations of Thompson’s groups \cite{LV2020, MM2017, Matui2015, Yang2022}, and have already been used to solve several important problems in group theory; see \cite{BHM2022, JM2013, JNdlS2016, Nek2018, SWZ2019}. Recent results also reveal interesting connections between topological full groups and the elusive simplicity problem for group C*-algebras; see \cite{BS2019, LBMB2018, Scarparo2023}. It is this latter problem that motivates our study.

For every ample Hausdorff groupoid $\GG$ with compact unit space, there are natural representations of the topological full group of $\GG$ in the complex Steinberg algebra of $\GG$ and in the full and reduced C*-algebras of $\GG$. It is known that these representations often fail to be injective. We make this statement precise by showing that the representation of the topological full group taking values in the Steinberg algebra of the groupoid is almost never injective. In particular, we show in \cref{thm: main} that injectivity fails when
\begin{enumerate}[label=(\arabic*)]
\item the groupoid is all isotropy and has at least $2$ nontrivial isotropy groups; or
\item the groupoid is not all isotropy and has at least $3$ non-unit elements.
\end{enumerate}
We then show that this representation is almost never surjective as a map into the complex Steinberg algebra. In fact, we show in \cref{cor: surj iff group} that the representation is surjective onto the Steinberg algebra if and only if $\GG$ is a group. However, strangely, the image of the representation of the topological full group may still be dense in the full or reduced groupoid C*-algebras. For example, density of the image holds for the representation of the topological full group associated to the Cuntz groupoid (that is, the boundary-path groupoid of the directed graph with a single vertex and two edges) into the Cuntz algebra $\OO_2$; see \cite[Remark~4.7]{BS2019} and \cite[Proposition~5.3]{HO2017}. \Cref{eg: F_2 sqcup F_2} provides another such example.

Our proof techniques for the results in \cref{sec: injectivity,sec: surjectivity} were developed by first considering these questions for discrete groupoids. The arguments in our proof of \cref{thm: main} in particular are quite combinatorial in nature.

In \cref{sec: discrete groupoids} we demonstrate that surprising things can happen in the setting of discrete groupoids. In \cref{thm: discrete full iff group}, we show that the image of the representation of the topological full group of a discrete groupoid with finite unit space is dense in the full groupoid C*\nobreakdash-algebra if and only if the groupoid is a group. (Note that the Cuntz groupoid mentioned above is not discrete, and thus this result does not hold for ample Hausdorff groupoids in general; see \cref{rem: Cuntz groupoid}.) In \cref{eg: F_2 sqcup F_2} we demonstrate that it is possible for the image of the representation of the topological full group of a discrete groupoid to be dense in the reduced groupoid C*-algebra even when the groupoid is not a group. Finally, in \cref{cor: isomorphism} we combine our results from \cref{sec: injectivity,sec: surjectivity,sec: discrete groupoids} to show that the representation of the topological full group of an ample Hausdorff groupoid $\GG$ with compact unit space is an isomorphism into the Steinberg algebra of $\GG$ if and only if $\GG$ is a group, and that when $\GG$ is discrete with finite unit space, the same result holds for the extension of this representation to the full C*-algebra.

\section{Preliminaries}

\subsection{Groupoids}
A \hl{groupoid} $\GG$ is a small category in which every morphism $\gamma \in \GG$ has a unique inverse $\gamma^{-1} \in \GG$. Throughout, we assume that all groupoids are nonempty. We define the \hl{range} and \hl{source} of each $\gamma \in \GG$ by $r(\gamma) \coloneqq \gamma \gamma^{-1}$ and $s(\gamma) \coloneqq \gamma^{-1} \gamma$, respectively, where composition is read from right to left. We write
\[
\GGc = \{ (\alpha,\beta) \in \GG \times \GG \mid s(\alpha) = r(\beta) \}
\]
for the set of \hl{composable pairs} in $\GG$, and we write $\GGo = r(\GG) = s(\GG)$ for the \hl{unit space} of $\GG$. Note that a groupoid $\GG$ is a group if and only if $\GGo$ is a singleton. A \hl{topological groupoid} is a groupoid endowed with a topology under which composition and inversion are continuous. A \hl{Hausdorff groupoid} is a topological groupoid with a locally compact Hausdorff topology. If $\GG$ is a Hausdorff groupoid, then $\GGo$ is closed in $\GG$. A topological groupoid $\GG$ is \hl{\'etale} if the range and source maps $r,s\colon \GG \to \GGo$ are local homeomorphisms. A subset $B \subseteq \GG$ is called a \hl{bisection} of $\GG$ if $r\restr{B}$ and $s\restr{B}$ are injective. If $B$ is an open bisection of an \'etale groupoid $\GG$, then $r\restr{B}$ and $s\restr{B}$ are homeomorphisms onto open subsets of $\GGo$. Every \'etale groupoid has a basis consisting of open bisections; see \cite[Proposition~3.5]{Exel2008}. We say that an \'etale groupoid is \hl{ample} if it has a basis of \emph{compact} open bisections. By \cite[Proposition~4.1]{Exel2010}, a Hausdorff \'etale groupoid is ample if and only if its unit space is totally disconnected. If $\GG$ is an \'etale groupoid, then the unit space $\GGo$ is open in $\GG$, and for all $u, v \in \GGo$, each of the sets
\[
\GG^u \coloneqq r^{-1}(u), \ \GG_v \coloneqq s^{-1}(v), \text{ and } \ \GG_v^u \coloneqq \GG^u \cap \GG_v
\]
is discrete with respect to the relative topology induced by $\GG$. The \hl{isotropy group} of a unit $u \in \GGo$ is the group
\[
\GG^u_u = \{\gamma \in \GG \mid r(\gamma) = s(\gamma) = u\},
\]
and the \hl{isotropy subgroupoid} of $\GG$ is the collection
\[
\Iso(\GG) \coloneqq \bigcup_{u \in \GGo} \, \GG^u_u = \{ \gamma \in \GG \mid r(\gamma) = s(\gamma) \}.
\]

Let $\GG$ be a Hausdorff \'etale groupoid. For each continuous function $f\colon \GG \to \C$, we define $\supp(f) \coloneqq \overline{\{ \gamma \in \GG : f(\gamma) \ne 0 \}}$. We write $C_c(\GG)$ for the collection of continuous compactly supported complex-valued functions on $\GG$. This is a $*$-algebra with respect to the convolution product
\[
(f * g)(\gamma) = \sum_{\alpha\beta = \gamma} f(\alpha) g(\beta)
\]
and $*$-involution $f^*(\gamma) = \overline{f(\gamma^{-1})}$ for $f,g \in C_c(\GG)$ and $\gamma \in \GG$. Given a Hilbert space $\HH$, we write $B(\HH)$ for the C*-algebra of bounded linear operators on $\HH$. The \hl{full groupoid C*-algebra} $C^*(\GG)$ is the completion of $C_c(\GG)$ with respect to the \hl{full C*-norm}
\[
\maxnorm{f} \coloneqq \sup\{ \norm{\pi(f)} \mid \pi\colon C_c(\GG) \to B(\HH) \text{ is a $*$-representation for some } \HH \}.
\]
For each $u \in \GGo$, there is a $*$-representation $\pi_u\colon C_c(\GG) \to B(\ell^2(\GG_u))$, called the \hl{regular representation} of $C_c(\GG)$ associated to $u$, such that
\[
\pi_u(f) \delta_\gamma = \sum_{\alpha \in \GG_{r(\gamma)}} f(\alpha) \delta_{\alpha\gamma} \quad \text{for } f \in C_c(\GG) \text{ and } \gamma \in \GG_u.
\]
The \hl{reduced groupoid C*-algebra} $C_r^*(\GG)$ is the completion of $C_c(\GG)$ with respect to the \hl{reduced C*-norm}
\[
\rnorm{f} \coloneqq \sup\{ \norm{\pi_u(f)} \mid u \in \GGo \}.
\]
See \cite[Chapter~II]{Renault1980} or \cite[Chapter~9]{Sims2020} for details.

The \hl{(complex) Steinberg algebra} of an ample Hausdorff groupoid $\GG$ is the collection
\begin{align*}
A(\GG) \coloneqq&~\vecspan\{ 1_U\colon \GG \to \C \mid U \text{ is a compact open bisection of } \GG \} \\
=&~\{ f \in C_c(\GG) \mid f \text{ is locally constant} \}
\end{align*}
equipped with the convolution product and $*$-involution defined above. If $\GG$ is discrete, then $A(\GG) = C_c(\GG)$. In general, $A(\GG)$ is dense in $C_c(\GG)$ with respect to both the full and reduced C*-norms (see \cite[Proposition~4.2]{CFST2014}), and for all $f \in A(\GG)$, we have
\begin{equation} \label{eqn: Steinberg *-reps are bounded}
\maxnorm{f} \coloneqq \sup\{ \norm{\pi(f)} \mid \pi\colon A(\GG) \to B(\HH) \text{ is a $*$-representation for some } \HH \}
\end{equation}
(see \cite[Theorem~7.1]{CZ2022}). Note that a discrete group $G$ may be viewed as an ample Hausdorff groupoid, and in this case the singletons in $G$ are all compact open bisections, and so the Steinberg algebra $A(G)$ is just the complex group ring $\C G$, which is generated by the point-mass functions $\delta_g \coloneqq 1_{\{g\}}$ for $g \in G$. See \cite{CFST2014, Steinberg2010} for further details on Steinberg algebras.

\subsection{Topological full groups}
Let $\GG$ be an ample groupoid with compact unit space $\GGo$. We write $\BcoG$ for the inverse semigroup of compact open bisections of $\GG$, and we say that a bisection $B$ of $\GG$ is \hl{full} if $r(B) = s(B) = \GGo$. We define the \hl{topological full group} of $\GG$ to be the (discrete) group
\[
F(\GG) \coloneqq \{ B \in \BcoG \mid B \text{ is full} \}
\]
equipped with the operations
\[
AB \coloneqq \{ \alpha\beta \mid (\alpha, \beta) \in (A \times B) \cap \GGc \} \ \text{ and } \ B^{-1} \coloneqq \{ \gamma^{-1} \mid \gamma \in B \}
\]
for all $A, B\in F(\GG)$. Note that if $\GG$ is a discrete group, then
\[
F(\GG) = \BcoG = \{ \{g\} \mid g \in \GG \} \cong \GG.
\]
See \cite{Matui2017, Nek2019} for further details on topological full groups.

Let $\GG$ be an ample Hausdorff groupoid with compact unit space. Given compact open bisections $U$ and $V$ of $\GG$, we have
\[
1_U * 1_V = 1_{UV} \quad \text{ and } \quad (1_U)^* = 1_{U^{-1}}.
\]
It follows that there is a $*$-homomorphism $\pi\colon \C F(\GG) \to A(\GG)$ satisfying $\pi(\delta_U) = 1_U$, which we call the \hl{representation} of $F(\GG)$ in $A(\GG)$. This representation is studied extensively in \cite{BS2019}, as are the C*-completions $\overline{\pi(\C F(\GG))}^{\maxnorm{\cdot}}$ and $\overline{\pi(\C F(\GG))}^{\rnorm{\cdot}}$ of its image. In this paper we investigate the necessary and sufficient conditions under which $\pi$ is injective and surjective.

\begin{remark}
In \cite[Definition~3.2]{NO2019} Nyland and Ortega define the topological full group of an (effective) ample Hausdorff groupoid with a unit space that is not necessarily compact. Since the Steinberg algebra of an ample Hausdorff groupoid $\GG$ is unital (with unit $1_{\GGo}$) if and only if the unit space $\GGo$ is compact, it is impossible to represent the topological full group of $\GG$ in $A(\GG)$ (or in $C^*(\GG)$ or $C_r^*(\GG)$) unless $\GGo$ is compact. It is for this reason that we restrict our attention in this paper to ample Hausdorff groupoids with compact unit space.
\end{remark}

\section{Lack of injectivity for ample Hausdorff groupoids}
\label{sec: injectivity}

In this section we characterise precisely when the representation $\pi\colon \delta_U \mapsto 1_U$ of $\C F(\GG)$ in $A(\GG)$ is injective. In particular, we show in \cref{thm: main} that $\pi$ is injective if and only if either $\GG$ consists entirely of isotropy and has at most one nontrivial isotropy group, or $\GG$ contains exactly $2$ non-unit elements outside its isotropy.

\begin{prop} \label{prop: rep not inj}
Let $\GG$ be an ample Hausdorff groupoid with compact unit space $\GGo$. Suppose that either
\begin{enumerate}[label = (\arabic*), ref=\arabic*]
\item \label[condition]{cond: iso rep not inj} $\GG = \Iso(\GG)$ and $\GG$ has at least two nontrivial isotropy groups; that is, there exist $u,v \in \GGo$ such that $u \ne v$ and $\abs{\GG_u^u}, \abs{\GG_v^v} > 1$; or
\item \label[condition]{cond: non-iso rep not inj} $\GG \ne \Iso(\GG)$ and
$\abs{\GG \setminus \GGo} \ge 3$.
\end{enumerate}
Then the representation $\pi\colon \C F(\GG) \to A(\GG)$ is not injective.
\end{prop}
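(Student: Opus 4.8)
The plan is to produce, in each case, an explicit nonzero element of $\ker\pi$. Since $\pi\bigl(\sum_i c_i \delta_{B_i}\bigr) = \sum_i c_i 1_{B_i}$ and the point masses $\delta_B$ for distinct full bisections $B$ are linearly independent in $\C F(\GG)$, it suffices to exhibit finitely many \emph{distinct} full bisections $B_1, \dots, B_n$ and scalars $c_1, \dots, c_n$, not all zero, with $\sum_i c_i 1_{B_i} = 0$ as a function on $\GG$; equivalently, $\sum_{i : \gamma \in B_i} c_i = 0$ for every $\gamma \in \GG$. The two basic gadgets I would use are \emph{swap toggles} and \emph{isotropy toggles}: given a compact open bisection $V$ with $V \cap \GGo = \emptyset$ and $r(V) \cap s(V) = \emptyset$, the set $V \cup V^{-1} \cup (\GGo \setminus (r(V) \cup s(V)))$ is a non-identity full bisection; and given $V$ with $r(V) = s(V) =: W$ and $V \cap \GGo = \emptyset$, the set $V \cup (\GGo \setminus W)$ is a non-identity full bisection.

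For \cref{cond: iso rep not inj} I would take nontrivial isotropy elements $g \in \GG_u^u$ and $h \in \GG_v^v$ with $u \neq v$. Because $\GG = \Iso(\GG)$, \emph{every} compact open bisection $C$ automatically satisfies $r(C) = s(C)$, so the isotropy-toggle construction is available with no topological obstruction: choose compact open bisections $C \ni g$ and $D \ni h$ missing $\GGo$ and, using that $\GGo$ is Hausdorff and totally disconnected, shrink them so that $r(C) = s(C)$ and $r(D) = s(D)$ are disjoint. The four full bisections $B_1 = \GGo$, $B_2 = C \cup (\GGo \setminus r(C))$, $B_3 = D \cup (\GGo \setminus r(D))$, and $B_4 = C \cup D \cup (\GGo \setminus (r(C) \cup r(D)))$ are then distinct, and expanding the indicators gives the ``grid'' relation $1_{B_1} - 1_{B_2} - 1_{B_3} + 1_{B_4} = 0$, so that $\delta_{B_1} - \delta_{B_2} - \delta_{B_3} + \delta_{B_4}$ is the desired kernel element.

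For \cref{cond: non-iso rep not inj} I would fix a non-isotropy arrow $\gamma_0\colon u \to w$ (so $u \neq w$) and split into cases according to whether $\GG$ admits two \emph{disjoint} toggles. The abstract mechanism is that if $X_1, X_2 \subseteq \GGo$ are disjoint compact open sets and $A_i$ is a non-identity full bisection of the restriction $\GG\restr{X_i}$, then the $2 \times 2$ grid of full bisections obtained by independently applying or not applying $A_1$ and $A_2$ (and the identity on $\GGo \setminus (X_1 \cup X_2)$) satisfies the same four-term relation as above. The arrow $\gamma_0$ already supplies one swap toggle supported near $\{u, w\}$; the hypothesis $\abs{\GG \setminus \GGo} \ge 3$ provides a third non-unit, and the task is to turn it into a second toggle disjoint from the first. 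I would show this succeeds whenever there is a non-unit with both endpoints outside $\{u,w\}$, or nontrivial isotropy at $u$ (which forces nontrivial isotropy at $w$ via $\gamma_0 g \gamma_0^{-1}$, yielding toggles near $u$ and near $w$). In the remaining case---no non-unit avoids $\{u,w\}$ and no isotropy at $u$ or $w$---I would argue that the third non-unit forces a third unit $c \notin \{u,w\}$ carrying unique arrows among $u,w,c$ and trivial isotropy, i.e.\ a copy of the pair groupoid on $\{u,w,c\}$ sitting inside $\GG$ on isolated units. Here the four-term relation is unavailable, but the six full bisections $B_\sigma$ realizing the permutations $\sigma$ of $\{u,w,c\}$ satisfy the symmetric-group relation $\sum_{\sigma \text{ even}} 1_{B_\sigma} = \sum_{\sigma \text{ odd}} 1_{B_\sigma}$, reflecting that the even and the odd permutation matrices of $S_3$ each sum to the all-ones matrix; this gives a nonzero kernel element.

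The main obstacle, and the place needing the most care, is twofold. First is the topological bookkeeping in the non-discrete setting: realizing the toggles as genuine compact open bisections with prescribed, disjoint ranges and sources, and in particular handling isotropy toggles when the isotropy fails to be open. I would resolve the latter by observing that if no isotropy toggle can be localized at a point $p$, then arbitrarily close to $p$ there are non-isotropy arrows---the local homeomorphism comparing $s$ and $r$ is not the identity near $p$---which instead furnish swap toggles near $p$. Second is verifying that the case split for \cref{cond: non-iso rep not inj} is exhaustive: the crucial step is showing that the failure to find two disjoint toggles forces $u$, $w$ (and any further support point) to be isolated and forces trivial isotropy there, since any point accumulating near $u$ inside the source of a bisection through $\gamma_0$ would produce a non-unit avoiding $\{u,w\}$. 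This pins down the pair-groupoid configuration and hence the applicability of the symmetric-group relation.
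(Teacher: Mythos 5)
Your proposal is correct and pursues the same overall strategy as the paper --- exhibiting an explicit nonzero element of $\ker{\pi}$ as a signed sum of $\delta_B$'s over full bisections assembled from small compact open bisections around the chosen non-units --- but it organises the case analysis for condition~(\ref{cond: non-iso rep not inj}) differently. For condition~(\ref{cond: iso rep not inj}) your four-term ``grid'' relation is literally the paper's $\delta_{U_1}+\delta_{U_2}-\delta_{U_3}-\delta_{\GGo}$. For condition~(\ref{cond: non-iso rep not inj}) the paper fixes two non-units and enumerates five incidence patterns of their ranges and sources, writing a bespoke relation for each, with reductions from the parallel-arrows case to the loop-with-tail case and from degenerate loops to genuine arrows; you instead organise everything around a single dichotomy: either two toggles with disjoint supports exist (four-term relation) or the non-units are forced to form the pair groupoid on three isolated units (the $S_3$ relation $\sum_{\sigma \text{ even}}\delta_{B_\sigma}-\sum_{\sigma \text{ odd}}\delta_{B_\sigma}$). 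In fact the paper's joined-arrows relation $\delta_U+\delta_{U^{-1}}+\delta_{\GGo}-\delta_{U_1}-\delta_{U_2}-\delta_{U_3}$ \emph{is} your $S_3$ relation applied blockwise, and the paper's device of passing to $VB'V$ and extracting a nearby non-isotropy arrow when $r(B_1)\ne s(B_1)$ is exactly your repair of a non-localisable isotropy toggle. What your route buys is a shorter, more conceptual exhaustiveness argument, since the only obstruction to two disjoint toggles is the rigid three-point configuration; what it costs is having to prove that rigidity (your isolation argument is correct, but note it needs the bisection through $\gamma_0$ shrunk off the clopen sets $\GGo$ and $\Iso(\GG)$ so that the perturbed arrow is a non-unit avoiding $\{u,w\}$). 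The paper avoids all isolation claims because its joined-arrows relation works for bisections with non-singleton ranges, at the price of the longer enumeration. Both arguments are complete once the toggle constructions are written out with the same care the paper devotes to shrinking bisections.
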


\begin{proof}
We first assume that \cref{cond: iso rep not inj} holds. Fix $\gamma_1, \gamma_2 \in \GG \setminus \GGo$ such that $r(\gamma_1) \ne r(\gamma_2)$. Since $\GGo$ is Hausdorff and $\GG$ is all isotropy, we can find disjoint compact open bisections $B_1$ and $B_2$ containing $\gamma_1$ and $\gamma_2$, respectively, such that
\[
r(B_1) = s(B_1), \quad r(B_2) = s(B_2), \quad \text{and } \quad r(B_1) \cap r(B_2) = \varnothing.
\]
Set $R \coloneqq \GGo \setminus \big(r(B_1) \cup r(B_2)\big)$, and note that $R$ is a compact open bisection of $\GG$. Consider the following disjoint unions:
\begin{align*}
U_1 &\coloneqq B_1 \cup r(B_2) \cup R, \\
U_2 &\coloneqq B_2 \cup r(B_1) \cup R, \text{ and} \\
U_3 &\coloneqq B_1 \cup B_2 \cup R.
\end{align*}
It is straightforward to verify that $U_1$, $U_2$, and $U_3$ are distinct elements of $F(\GG)$. Define $a \coloneqq \delta_{U_1} + \delta_{U_2} - \delta_{U_3} - \delta_{\GGo}$. Then
\[
\pi(a) = 1_{U_1} + 1_{U_2} - 1_{U_3} - 1_{\GGo} = 1_{r(B_2)} + 1_{r(B_1)} + 1_R - 1_{\GGo} = 0,
\]
and so $0 \ne a \in \ker{\pi}$. Hence $\pi$ is not injective.

We now assume that \cref{cond: non-iso rep not inj} holds instead. Then there exist
\begin{equation} \label[condition]{cond: not inj}
\text{$\gamma_1 \in \GG \setminus \GGo$ and $\gamma_2 \in \GG \setminus \Iso(\GG)$ such that $\gamma_1 \ne \gamma_2$ and $\gamma_1 \ne \gamma_2^{-1}$.}
\end{equation}
For any $\gamma_1, \gamma_2$ satisfying \cref{cond: not inj}, we have $r(\gamma_2) \ne s(\gamma_2)$, and either $\gamma_1 \notin \Iso(\GG)$ or $\gamma_1 \in \Iso(\GG)$. By replacing $\gamma_1$ with $\gamma_1^{-1}$ or $\gamma_2$ with $\gamma_2^{-1}$ if necessary, we can summarise all possible cases as follows:
\begin{enumerate}[label = (\roman*)]
\item \label[case]{case: joined arrows} $\gamma_1 \notin \Iso(\GG)$ and $s(\gamma_1) = r(\gamma_2)$ and $s(\gamma_2) \ne r(\gamma_1)$;
\item \label[case]{case: separate arrows} $\gamma_1 \notin \Iso(\GG)$ and $r(\gamma_1)$, $s(\gamma_1)$, $r(\gamma_2)$, and $s(\gamma_2)$ are all distinct;
\item \label[case]{case: separate loop and arrow} $\gamma_1 \in \Iso(\GG)$ and $s(\gamma_1)$, $r(\gamma_2)$, and $s(\gamma_2)$ are all distinct;
\item \label[case]{case: loop with tail} $\gamma_1 \in \Iso(\GG)$ and $s(\gamma_1) = r(\gamma_2)$; and
\item \label[case]{case: parallel arrows} $\gamma_1 \notin \Iso(\GG)$ and $s(\gamma_1) = r(\gamma_2)$ and $s(\gamma_2) = r(\gamma_1)$.
\end{enumerate}

\begin{figure}[h]
\begin{tikzpicture}[font=\footnotesize]
\coordinate (A) at (0,0);
\coordinate (B) at (2,0);
\coordinate (C) at (4,0);
\coordinate (D) at (1,0.3);
\coordinate (E) at (3,0.3);
\coordinate (F) at (3,-0.3);
\node[circle, draw, inner sep = 1pt, fill = black] at (A) {};
\node[circle, draw, inner sep = 1pt, fill = black] at (B) {};
\node[circle, draw, inner sep = 1pt, fill = black] at (C) {};
\draw[postaction = {decorate, decoration = {markings, mark = at position 0.5 with {\arrow{>}}}}, bend right] (B) to (A);
\node[above] at (D) {$\gamma_1$};
\draw[postaction = {decorate, decoration = {markings, mark = at position 0.5 with {\arrow{>}}}}, bend right] (C) to (B);
\node[above] at (E) {$\gamma_2$};
\draw[postaction = {decorate, decoration = {markings, mark = at position 0.5 with {\arrow{>}}}}, bend right] (B) to (C);
\node[below] at (F) {$\gamma_2^{-1}$};
\coordinate (G) at (7.2,0);
\coordinate (H) at (9.2,0);
\coordinate (I) at (6.5,0);
\coordinate (J) at (8.2,0.3);
\coordinate (K) at (8.2,-0.3);
\node[circle, draw, inner sep = 1pt, fill = black] at (G) {};
\node[circle, draw, inner sep = 1pt, fill = black] at (H) {};
\draw[postaction = {decorate, decoration = {markings, mark = at position 0.5 with {\arrow{<}}}}, loop above, out = -130, in = 130, looseness = 8, min distance = 15mm] (G) to (G);
\node[left] at (I) {$\gamma_1$};
\draw[postaction = {decorate, decoration = {markings, mark = at position 0.5 with {\arrow{>}}}}, bend right] (H) to (G);
\node[above] at (J) {$\gamma_2$};
\draw[postaction = {decorate, decoration = {markings, mark = at position 0.5 with {\arrow{>}}}}, bend right] (G) to (H);
\node[below] at (K) {$\gamma_2^{-1}$};
\coordinate (L) at (11.2,0);
\coordinate (M) at (13.2,0);
\coordinate (N) at (12.2,0.7);
\coordinate (O) at (12.2,0.3);
\coordinate (P) at (12.2,-0.25);
\node[circle, draw, inner sep = 1pt, fill = black] at (L) {};
\node[circle, draw, inner sep = 1pt, fill = black] at (M) {};
\draw[postaction = {decorate, decoration = {markings, mark = at position 0.5 with {\arrow{>}}}}, bend left = 80] (L) to (M);
\node[above] at (N) {$\gamma_1$};
\draw[postaction = {decorate, decoration = {markings, mark = at position 0.5 with {\arrow{>}}}}, bend right] (M) to (L);
\node[below] at (O) {$\gamma_2$};
\draw[postaction = {decorate, decoration = {markings, mark = at position 0.5 with {\arrow{>}}}}, bend right] (L) to (M);
\node[below] at (P) {$\gamma_2^{-1}$};
\end{tikzpicture}
\vspace{-0.5ex}
\caption{From left to right: \cref{case: joined arrows,case: loop with tail,case: parallel arrows}.}
\end{figure}

\vspace{1ex}

\noindent Moreover, we can reduce \cref{case: parallel arrows} to \cref{case: loop with tail} by replacing $\gamma_1$ with $\gamma_2 \gamma_1$. Therefore, it suffices to show that $\ker{\pi}$ is nontrivial in each of the \crefrange{case: joined arrows}{case: loop with tail}.

\case{case: joined arrows} Suppose that the hypotheses of \cref{case: joined arrows} hold, and let $B_1$ and $B_2$ be compact open bisections containing $\gamma_1$ and $\gamma_2$, respectively. Since $\GGo$ is Hausdorff and since $r(\gamma_1)$, $s(\gamma_1)$, and $s(\gamma_2)$ are all distinct, we may assume that $r(B_1)$, $s(B_1)$, and $s(B_2)$ are mutually disjoint by shrinking $B_1$ and $B_2$ if necessary. Moreover, since $s(\gamma_1) = r(\gamma_2)$, we can replace $B_1$ with $B_1 \big(s(B_1) \cap r(B_2)\big)$ and $B_2$ with $\big(s(B_1) \cap r(B_2)\big) B_2$, and thus without loss of generality we may assume that $s(B_1) = r(B_2)$. Set $R \coloneqq \GGo \setminus \big(r(B_1) \cup s(B_1) \cup s(B_2)\big)$, and note that $R$ is a compact open bisection of $\GG$. Consider the following disjoint unions that are distinct elements of $F(\GG)$:
\begin{align*}
U \coloneqq&~B_1 \cup B_2 \cup (B_1 B_2)^{-1} \cup R, \\
U^{-1} =&~B_1^{-1} \cup B_2^{-1} \cup (B_1 B_2) \cup R, \\
U_1 \coloneqq&~B_1 \cup B_1^{-1} \cup s(B_2) \cup R, \\
U_2 \coloneqq&~B_2 \cup B_2^{-1} \cup r(B_1) \cup R, \text{ and} \\
U_3 \coloneqq&~B_1 B_2 \cup (B_1 B_2)^{-1} \cup s(B_1) \cup R.
\end{align*}
Define $a \coloneqq \delta_U + \delta_{U^{-1}} - \delta_{U_1} - \delta_{U_2} - \delta_{U_3} + \delta_{\GGo}$. Then
\begin{align*}
\pi(a) = 1_U + 1_{U^{-1}} - 1_{U_1} - 1_{U_2} - 1_{U_3} + 1_{\GGo}
= -1_{s(B_2)} - 1_{r(B_1)} - 1_{s(B_1)} - 1_R + 1_{\GGo} = 0,
\end{align*}
and so $a \in \ker{\pi} {\setminus} \{0\}$. Hence $\pi$ is not injective.

\case{case: separate arrows} Now suppose that the hypotheses of \cref{case: separate arrows} hold, and let $B_1$ and $B_2$ be compact open bisections containing $\gamma_1$ and $\gamma_2$, respectively. Since $\GGo$ is Hausdorff and since $r(\gamma_1)$, $s(\gamma_1)$, $r(\gamma_2)$, and $s(\gamma_2)$ are all distinct, we may assume that $r(B_1)$, $s(B_1)$, $r(B_2)$, and $s(B_2)$ are mutually disjoint by shrinking $B_1$ and $B_2$ if necessary. Set
\[
R \coloneqq \GGo \setminus \big(r(B_1) \cup s(B_1) \cup r(B_2) \cup s(B_2)\big),
\]
and note that $R$ is a compact open bisection of $\GG$. Consider the following disjoint unions that are distinct elements of $F(\GG)$:
\begin{align*}
U_1 &\coloneqq B_1 \cup B_1^{-1} \cup r(B_2) \cup s(B_2) \cup R, \\
U_2 &\coloneqq B_2 \cup B_2^{-1} \cup r(B_1) \cup s(B_1) \cup R, \text{ and} \\
U_3 &\coloneqq B_1 \cup B_1^{-1} \cup B_2 \cup B_2^{-1} \cup R.
\end{align*}
It is straightforward to verify that $a \coloneqq \delta_{U_1} + \delta_{U_2} - \delta_{U_3} - \delta_{\GGo} \in \ker{\pi} {\setminus} \{0\}$, and hence $\pi$ is not injective.

\case{case: separate loop and arrow} Next, suppose that the hypotheses of \cref{case: separate loop and arrow} hold, and let $B'_1$ and $B_2$ be compact open bisections containing $\gamma_1$ and $\gamma_2$, respectively. Since $\GGo$ is Hausdorff and since $s(\gamma_1)$, $r(\gamma_2)$, and $s(\gamma_2)$ are all distinct, we may assume that $s(B'_1)$, $r(B_2)$, and $s(B_2)$ are mutually disjoint by shrinking $B'_1$ and $B_2$ if necessary. Let $V \coloneqq r(B'_1) \cap s(B'_1)$, and define $B_1 \coloneqq V B'_1 V$. Then $B_1$ is a compact open bisection containing $\gamma_1$, because $\GG$ is an ample Hausdorff groupoid and $r(\gamma_1) = s(\gamma_1) \in V$. Suppose that $r(B_1) \ne s(B_1)$. Then there exists $\alpha \in B_1$ such that $r(\alpha) \notin s(B_1)$ or $s(\alpha) \notin r(B_1)$. In either case, $\alpha \notin \Iso(\GG)$ and $r(\alpha), s(\alpha) \in V \subseteq s(B'_1)$. Thus, since $\gamma_2 \in \GG \setminus \Iso(\GG)$ and $r(\gamma_2), s(\gamma_2) \in \GGo {\setminus} s(B'_1)$, we deduce that $\alpha \ne \gamma_2$ and $\alpha \ne \gamma_2^{-1}$, and that $r(\alpha)$, $s(\alpha)$, $r(\gamma_2)$, and $s(\gamma_2)$ are all distinct. So if $r(B_1) \ne s(B_1)$, then \cref{case: separate loop and arrow} can be reduced to \cref{case: separate arrows} by replacing $\gamma_1$ with $\alpha$. Now suppose that $r(B_1) = s(B_1)$. Since $r(B_1) \subseteq V \subseteq s(B'_1)$, we know that $r(B_1)$, $r(B_2)$, and $s(B_2)$ are mutually disjoint. Set $R \coloneqq \GGo \setminus \big(r(B_1) \cup r(B_2) \cup s(B_2)\big)$, and note that $R$ is a compact open bisection of $\GG$. Consider the following disjoint unions that are distinct elements of $F(\GG)$:
\begin{align*}
U_1 &\coloneqq r(B_1) \cup B_2 \cup B_2^{-1} \cup R, \\
U_2 &\coloneqq B_1 \cup r(B_2) \cup s(B_2) \cup R, \text{ and} \\
U_3 &\coloneqq B_1 \cup B_2 \cup B_2^{-1} \cup R.
\end{align*}
It is straightforward to verify that $a \coloneqq \delta_{U_1} + \delta_{U_2} - \delta_{U_3} - \delta_{\GGo} \in \ker{\pi} {\setminus} \{0\}$, and hence $\pi$ is not injective in this case either.

\case{case: loop with tail} Finally, suppose that the hypotheses of \cref{case: loop with tail} hold, and let $B'_1$ and $B'_2$ be compact open bisections containing $\gamma_1$ and $\gamma_2$, respectively. Since $\GGo$ is Hausdorff and since $r(\gamma_2) \ne s(\gamma_2)$, we may assume that $r(B'_2) \cap s(B'_2) = \varnothing$. Let $W \coloneqq r(B'_1) \cap s(B'_1) \cap r(B'_2)$, and define  $B_1 \coloneqq W B'_1 W$ and $B_2 \coloneqq s(B_1) B'_2$. Then $B_1$ and $B_2$ are compact open bisections containing $\gamma_1$ and $\gamma_2$, respectively, because $\GG$ is an ample Hausdorff groupoid and $r(\gamma_1) = s(\gamma_1) = r(\gamma_2) \in W$. Since $s(B_1) \subseteq W \subseteq r(B'_2)$, we have $r(B_2) = s(B_1) \cap r(B'_2) = s(B_1)$. Suppose that $r(B_1) \ne s(B_1)$. Then there exists $\alpha \in B_1$ such that $r(\alpha) \notin s(B_1)$ or $s(\alpha) \notin r(B_1)$. In either case, $\alpha \notin \Iso(\GG)$ and $r(\alpha), s(\alpha) \in W \subseteq r(B'_2)$, so there exists $\beta \in B'_2$ such that $r(\beta) = s(\alpha) \in W$. Since $s(\beta) \in s(B'_2) \subseteq \GGo {\setminus} r(B'_2) \subseteq \GGo {\setminus}W$, we know that $s(\beta) \ne r(\alpha)$, $s(\beta) \ne s(\alpha)$, and $s(\beta) \ne r(\beta)$. Thus $\beta \in \GG \setminus \Iso(\GG)$, $\alpha \ne \beta$, and $\alpha \ne \beta^{-1}$. So if $r(B_1) \ne s(B_1)$, then \cref{case: loop with tail} can be reduced to \cref{case: joined arrows} by replacing $\gamma_1$ and $\gamma_2$ with $\alpha$ and $\beta$, respectively. Now suppose that $r(B_1) = s(B_1) = r(B_2)$. Since $r(B_2) \subseteq r(B'_2)$ and $s(B_2) \subseteq s(B'_2)$, we know that $r(B_2) \cap s(B_2) = \varnothing$. Set $R \coloneqq \GGo \setminus \big(r(B_2) \cup s(B_2)\big)$, and note that $R$ is a compact open bisection of $\GG$. Consider the following disjoint unions that are elements of $F(\GG)$:
\begin{align*}
U_1 &\coloneqq B_2 \cup (B_1 B_2)^{-1} \cup R, \\
U_2 &\coloneqq B_1 B_2 \cup B_2^{-1} \cup R, \\
U_3 &\coloneqq B_2 \cup B_2^{-1} \cup R, \text{ and} \\
U_4 &\coloneqq B_1 B_2 \cup (B_1 B_2)^{-1} \cup R.
\end{align*}
To see that $U_1$, $U_2$, $U_3$, and $U_4$ are distinct elements of $F(\GG)$, note that since $\gamma_1 \notin \GGo$, we have $\gamma_2 \ne \gamma_1 \gamma_2$, and hence $\gamma_2 \notin B_1 B_2$, because $\gamma_1 \gamma_2$ is the unique element of the bisection $B_1 B_2$ with source $s(\gamma_2)$. It is straightforward to verify that $a \coloneqq \delta_{U_1} + \delta_{U_2} - \delta_{U_3} - \delta_{U_4} \in \ker{\pi} {\setminus} \{0\}$, and hence $\pi$ is not injective.
\end{proof}

We conclude this section by proving that the converse of \cref{prop: rep not inj} also holds.

\begin{thm} \label{thm: main}
Let $\GG$ be an ample Hausdorff groupoid with compact unit space. The representation $\pi\colon \C F(\GG) \to A(\GG)$ is injective if and only if
\begin{enumerate}[label=(\arabic*), ref=\arabic*]
\item \label[condition]{cond: all isotropy} $\GG = \Iso(\GG)$ and $\GG$ has at most one nontrivial isotropy group; or
\item \label[condition]{cond: not all isotropy} $\GG \ne \Iso(\GG)$ and $\abs{\GG \setminus \GGo} < 3$.
\end{enumerate}
\end{thm}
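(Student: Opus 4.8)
The backward implication is immediate from \cref{prop: rep not inj}: if $\pi$ is injective then neither hypothesis of that proposition can hold, and splitting into the cases $\GG = \Iso(\GG)$ and $\GG \ne \Iso(\GG)$ shows at once that \cref{cond: all isotropy} or \cref{cond: not all isotropy} must hold (when $\GG = \Iso(\GG)$, failure of \cref{cond: iso rep not inj} forces at most one nontrivial isotropy group; when $\GG \ne \Iso(\GG)$, failure of \cref{cond: non-iso rep not inj} forces $\abs{\GG \setminus \GGo} < 3$). So the real content is the forward implication, which I would prove by showing that in each case the family $\{1_U : U \in F(\GG)\}$ is linearly independent in $A(\GG)$. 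Since $\pi(\delta_U) = 1_U$ and the $\delta_U$ form a basis of $\C F(\GG)$, this is precisely injectivity of $\pi$.

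For \cref{cond: all isotropy}, if every isotropy group is trivial then $\GG = \GGo$ and $F(\GG) = \{\GGo\}$, so $\pi$ is trivially injective. Otherwise there is a unique $u_0 \in \GGo$ with $\abs{\GG_{u_0}^{u_0}} > 1$, and the key structural fact is that a full bisection $U$ is completely determined by a single element $g_U \in \GG_{u_0}^{u_0}$. Indeed, since $\GG$ is all isotropy, fullness forces the unique element of $U$ with range $w$ to lie in $\GG_w^w$ for each $w \in \GGo$; for $w \ne u_0$ this group is trivial, so $w \in U$, while at $u_0$ the chosen element is some $g_U \in \GG_{u_0}^{u_0}$. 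Hence two full bisections with the same value at $u_0$ coincide. Given a finite relation $\sum_i c_i 1_{U_i} = 0$ with the $U_i$ distinct, I would evaluate at the point $g_{U_k} \in \GG_{u_0}^{u_0} \subseteq \GG$: since $g_{U_k} \in U_i$ forces $g_{U_i} = g_{U_k}$ and hence $U_i = U_k$, this evaluation returns exactly $c_k$, so every coefficient vanishes.

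For \cref{cond: not all isotropy}, the hypothesis $\GG \ne \Iso(\GG)$ yields a non-isotropy element $\gamma$ with $\gamma \ne \gamma^{-1}$, and $\abs{\GG \setminus \GGo} < 3$ then forces $\GG \setminus \GGo = \{\gamma, \gamma^{-1}\}$ with $r(\gamma) = s(\gamma^{-1}) =: u$, $s(\gamma) = r(\gamma^{-1}) =: v$, and $u \ne v$. Writing $X := \GGo \setminus \{u,v\}$, I would run a purely combinatorial matching argument: a full bisection must contain $w$ for every $w \in X$, and over $\{u,v\}$ it must select, injectively in both range and source, from the four elements $u, v, \gamma, \gamma^{-1}$. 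Checking the four a priori choices shows the only admissible restrictions are the identity $\{u,v\}$ and the transposition $\{\gamma,\gamma^{-1}\}$, so $F(\GG) \subseteq \{\,\GGo,\ X \cup \{\gamma,\gamma^{-1}\}\,\}$. Evaluating a relation between the two corresponding indicators at $\gamma$ (which lies only in the second) and then at $u$ shows both coefficients vanish, giving injectivity.

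The main obstacle is pinning down the set of full bisections precisely enough to run these evaluation arguments: the ``determined by the value at $u_0$'' principle in \cref{cond: all isotropy} and the matching enumeration in \cref{cond: not all isotropy}. A secondary, genuinely topological point is that the distinguished units ($u_0$, respectively $u$ and $v$) are isolated in $\GGo$; one proves this by taking a compact open bisection around the relevant non-unit, using that it is clopen in the Hausdorff groupoid $\GG$, and deriving a contradiction from a sequence of units converging to the distinguished unit. This refinement is needed only to confirm that the candidate bisections genuinely lie in $F(\GG)$ (so that $F(\GG) \cong \GG_{u_0}^{u_0}$, respectively $F(\GG) \cong \mathbb{Z}/2\mathbb{Z}$); for the injectivity statement itself it is not required, since the linear independence arguments use only the \emph{containments} of $F(\GG)$ in the explicitly described families.
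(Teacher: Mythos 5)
Your proposal is correct and follows essentially the same route as the paper: the backward implication via the contrapositive of \cref{prop: rep not inj}, and the forward implication by pinning down the elements of $F(\GG)$ (as $\{g\}\sqcup(\GGo\setminus\{u_0\})$ in case \cref{cond: all isotropy}, and as $\GGo$ or $(\GGo\setminus\{r(\gamma),s(\gamma)\})\cup\{\gamma,\gamma^{-1}\}$ in case \cref{cond: not all isotropy}) followed by point evaluations to kill the coefficients. Your observation that only the containment of $F(\GG)$ in these families is needed, so the topological verification that the candidate sets are compact open (which the paper carries out) can be omitted for injectivity, is a minor but valid streamlining.
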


\begin{proof}
If $\pi$ is injective, then the result follows by the contrapositive of \cref{prop: rep not inj}. For the converse, first suppose that \cref{cond: all isotropy} holds. If $\GG = \GGo$, then $F(\GG) = \{\GGo\}$, and so $\C F(\GG) \cong \C$, and hence $\pi$ is injective. Suppose that $\GG \ne \GGo$. Then there exists a nontrivial discrete group $\Gamma$ with identity $e_\Gamma$ such that $\GG = \Gamma \sqcup X$, where $X = \GGo {\setminus} \{e_\Gamma\}$. Since $\GG$ is Hausdorff and \'etale, $X = (\GG {\setminus} \{e_\Gamma\}) \cap \GGo$ is open in $\GG$. We claim that $X$ is compact. To see this, first observe that since $\GG$ is Hausdorff, $\GGo$ is closed, and so $\Gamma {\setminus} \{e_\Gamma\} = \GG {\setminus} \GGo$ is open in $\GG$. Thus, since $\GG$ is \'etale, $\{e_\Gamma\} = r(\Gamma {\setminus} \{e_\Gamma\})$ is open in $\GG$, and so $X = (\GG {\setminus} \{e_\Gamma\}) \cap \GGo$ is closed. Now, since $X \subseteq \GGo$ and $\GGo$ is compact by hypothesis, $X$ must also be compact, as claimed. For each $\gamma \in \Gamma$, choose a compact open bisection $U_\gamma$ of $\GG$ containing $\gamma$. Then $U_\gamma \cap \Gamma = \{\gamma\}$. Since $X = \GGo {\setminus} \{e_\Gamma\}$ is compact and open in $\GG$, we have $V_\gamma \coloneqq U_\gamma \cup X = \{\gamma\} \sqcup X \in F(\GG)$, and it follows that $F(\GG) = \big\{ \{\gamma\} \sqcup X \mid \gamma \in \Gamma \big\}$. Now, let $f \in \ker{\pi} \subseteq \C F(\GG)$. Then for some $m \in \N$, there exist $c_1, \dotsc, c_m \in \C$ and $\gamma_1, \dotsc, \gamma_m \in \Gamma$ such that $\gamma_i \ne \gamma_j$ whenever $i \ne j$, and $f = \sum_{i=1}^m c_i \, \delta_{\{\gamma_i\} \sqcup X}$. Since $\pi(f) = 0$, we have
\[
c_k = \Big( \sum_{i=1}^m c_i 1_{\{\gamma_i\}} + \big( \sum_{i=1}^m c_i \big) 1_X \Big)(\gamma_k) = \pi(f)(\gamma_k) = 0
\]
for each $k \in \{1, \dotsc, m\}$, and so $f = 0$. Thus $\pi$ is injective.

Now suppose that \cref{cond: not all isotropy} holds. Since $\GG \ne \Iso(\GG)$, there exists $\gamma \in \GG \setminus \Iso(\GG)$, and it follows that $\gamma$ and $\gamma^{-1}$ are distinct elements of $\GG \setminus \GGo$. Thus $\abs{\GG \setminus \GGo} = 2$, and so $\GG = \GGo \sqcup \{\gamma, \gamma^{-1}\}$. In particular, $\GG$ is compact. Since $\GG$ is Hausdorff, $\GGo$ is closed, and so $\{\gamma, \gamma^{-1}\} = \GG \setminus \GGo$ is open. Thus $\{r(\gamma), s(\gamma)\} = r(\{\gamma, \gamma^{-1}\})$ is open since $\GG$ is \'etale. Therefore, $U \coloneqq \GG \setminus \{r(\gamma), s(\gamma)\}$ is a closed subset of $\GG$, and by the compactness of $\GG$ it follows that $U$ is a full compact open bisection containing $\gamma$ and $\gamma^{-1}$. In fact, given $V \in F(\GG)$ with $\gamma \in V$, we must have $r(\gamma), s(\gamma) \notin V$, and so $V = U$. It follows that $F(\GG) = \{U, \GGo\}$. Suppose that $f = a \delta_U + b \delta_{\GGo} \in \ker(\pi)$ for some $a, b \in \C$. Then $a = \pi(f)(\gamma) = 0$ and $b = \pi(f)(r(\gamma)) = 0$, and so $f = 0$. Thus $\pi$ is injective.
\end{proof}

\section{Lack of surjectivity for ample Hausdorff groupoids}
\label{sec: surjectivity}

In this section we study the image of the representation $\pi\colon \delta_U \mapsto 1_U$ of $\C F(\GG)$ in $A(\GG)$. In particular, we show in \cref{cor: surj iff group} that $\pi$ is surjective if and only if $\GG$ is a group.

We begin by proving certain properties for elements of the image of $\pi$. Recall (for instance, from \cite[Section~2.2]{Matui2016}) that there are linear maps $r_*, s_*\colon A(\GG) \to A(\GGo)$ given by
\[
r_*f(u) \coloneqq \sum_{\gamma \in \GG^u} f(\gamma) \quad \text{and} \quad s_*f(u) \coloneqq \sum_{\gamma \in \GG_u} f(\gamma), \ \text{ for all } f \in A(\GG) \text{ and } u \in \GGo;
\]
and there is a linear map $\delta_1\colon A(\GG) \to A(\GGo)$ given by $\delta_1 \coloneqq s_* - r_*$.

\begin{prop} \label{prop: ample rep image}
Let $\GG$ be an ample Hausdorff groupoid with compact unit space $\GGo$. Then
\begin{enumerate}[label=(\alph*)]
\item \label{item: ample rep image in ker delta_1} $\pi(\C F(\GG)) \subseteq \{ f \in A(\GG) \mid r_*f(u) = s_*f(v) \text{ for all } u, v \in \GGo \} \subseteq \ker{\delta_1}$;\, and
\item \label{item: r_* and s_* of ample rep image} $r_*\big(\pi(\C F(\GG))\big) = \C 1_{\GGo} = s_*\big(\pi(\C F(\GG))\big)$.
\end{enumerate}
\end{prop}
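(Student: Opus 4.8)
The plan is to reduce everything to a single computation: that $r_*(1_U) = s_*(1_U) = 1_{\GGo}$ for every full compact open bisection $U$, after which both parts follow by linearity. First I would fix such a $U$ and a unit $u \in \GGo$. Because $U$ is a bisection, $r\restr{U}$ is injective, and because $U$ is full, $r(U) = \GGo$; hence there is exactly one element of $U$ with range $u$, so $\abs{\GG^u \cap U} = 1$. Therefore
\[
r_*(1_U)(u) = \sum_{\gamma \in \GG^u} 1_U(\gamma) = \abs{\GG^u \cap U} = 1,
\]
and since $u$ was arbitrary, $r_*(1_U) = 1_{\GGo}$. The identical argument with $s\restr{U}$ in place of $r\restr{U}$ gives $s_*(1_U) = 1_{\GGo}$.

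Next I would extend this to arbitrary elements of the image by linearity. Writing a general element of $\C F(\GG)$ as $f = \sum_{i=1}^n c_i \delta_{U_i}$ with $c_i \in \C$ and $U_i \in F(\GG)$, the definition of $\pi$ together with the linearity of $r_*$ and $s_*$ yields
\[
r_*\big(\pi(f)\big) = \sum_{i=1}^n c_i\, r_*(1_{U_i}) = \Big( \sum_{i=1}^n c_i \Big) 1_{\GGo} = \sum_{i=1}^n c_i\, s_*(1_{U_i}) = s_*\big(\pi(f)\big).
\]

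For part \ref{item: ample rep image in ker delta_1}, I would read off from this identity that, for all $u, v \in \GGo$,
\[
r_*\big(\pi(f)\big)(u) = \sum_{i=1}^n c_i = s_*\big(\pi(f)\big)(v),
\]
which places $\pi(f)$ in the middle set and hence gives the first containment. The second containment is a general fact about that set, independent of $\pi$: if $f \in A(\GG)$ satisfies $r_*f(u) = s_*f(v)$ for all $u, v$, then taking $v = u$ yields $\delta_1 f(u) = s_*f(u) - r_*f(u) = 0$ for every $u$, so $f \in \ker{\delta_1}$. For part \ref{item: r_* and s_* of ample rep image}, the same displayed identity shows $r_*(\pi(f)), s_*(\pi(f)) \in \C 1_{\GGo}$, giving $r_*(\pi(\C F(\GG))), s_*(\pi(\C F(\GG))) \subseteq \C 1_{\GGo}$. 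For the reverse inclusions I would use that $\GGo$ is itself a full compact open bisection (it is compact by hypothesis, open since $\GG$ is étale, and visibly satisfies $r(\GGo) = s(\GGo) = \GGo$), so that $\GGo$ is the identity of $F(\GG)$ and $\pi(c\,\delta_{\GGo}) = c\,1_{\GGo}$ satisfies $r_*(c\,1_{\GGo}) = s_*(c\,1_{\GGo}) = c\,1_{\GGo}$ for every $c \in \C$.

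There is no serious obstacle here; the entire content is the observation that fullness of $U$ forces each fibre $\GG^u \cap U$ and $\GG_u \cap U$ to be a singleton, which collapses the defining sums for $r_*$ and $s_*$ to the constant value $1$. The only points requiring a little care are confirming that $\GGo \in F(\GG)$ (so that the image under $r_*$ and $s_*$ is all of $\C 1_{\GGo}$ rather than a proper subspace) and keeping the distinction between ``independent of the unit'' (the first containment in \ref{item: ample rep image in ker delta_1}) and ``annihilated by $\delta_1$'' (the second).
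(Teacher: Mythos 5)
Your proposal is correct and follows essentially the same route as the paper: the key observation in both is that fullness of each $U_i$ forces the fibres $\GG^u \cap U_i$ and $\GG_v \cap U_i$ to be singletons, so the defining sums for $r_*$ and $s_*$ collapse to $\sum_i c_i$, and everything else is linearity. Your explicit verification of the reverse inclusion in part (b) via $\GGo \in F(\GG)$ is a small point the paper leaves implicit, but it is not a different argument.
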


\begin{proof}
For part~\cref{item: ample rep image in ker delta_1}, fix $f \in \pi(\C F(\GG))$. Then there exist $U_1, \dotsc, U_m \in F(\GG)$ and $c_1, \dotsc, c_m \in \C$ such that
\[
f = \pi\Big( \sum_{i=1}^m c_i \, \delta_{U_i} \Big) = \sum_{i=1}^m c_i 1_{U_i}.
\]
Fix $u, v \in \GGo$. For each $i \in \{1, \dotsc, m\}$, the sets $U_i \cap \GG^u$ and $U_i \cap \GG_v$ are singletons because $U_i$ is a full bisection of $\GG$. Thus
\[
r_*f(u) \,=\, \sum_{\gamma \in \GG^u} f(\gamma) \,=\, \sum_{\gamma \in \GG^u} \ \sum_{i : \gamma \in U_i} c_i \,=\, \sum_{i=1}^m c_i \,=\, \sum_{\gamma \in \GG_v} \ \sum_{i : \gamma \in U_i} c_i \,=\, \sum_{\gamma \in \GG_v } f(\gamma) \,=\, s_*f(v).
\]
It follows that $r_*f(x) = s_*f(x)$ for all $x \in \GGo$, and so $\delta_1(f) = 0$.

We now prove part~\cref{item: r_* and s_* of ample rep image}. Routine calculations show that for all $B \in \BcoG$, we have $r_*(1_B) = 1_{r(B)}$ and $s_*(1_B) = 1_{s(B)}$. Thus, for all $B \in F(\GG)$, we have $r_*(1_B) = 1_{\GGo} = s_*(1_B)$. Since $r_*$, $s_*$, and $\pi$ are all linear maps, it follows that
\[
r_*\big(\pi(\C F(\GG))\big) = \C 1_{\GGo} = s_*\big(\pi(\C F(\GG))\big). \qedhere
\]
\end{proof}

In order to prove \cref{cor: surj iff group}, we first utilise \cref{prop: ample rep image}\cref{item: r_* and s_* of ample rep image} to prove the following result. We thank the anonymous referee for suggesting this simple proof.

\begin{prop} \label{prop: TFG and im(pi)}
Let $\GG$ be an ample Hausdorff groupoid with compact unit space $\GGo$. If $B$ is a nonempty compact open bisection of $\GG$ such that $1_B \in \pi(\C F(\GG))$, then $B \in F(\GG)$.
\end{prop}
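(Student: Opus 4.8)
The plan is to apply the maps $r_*$ and $s_*$ to $1_B$ and invoke \cref{prop: ample rep image}\cref{item: r_* and s_* of ample rep image}, which identifies $r_*(\pi(\C F(\GG)))$ and $s_*(\pi(\C F(\GG)))$ with $\C 1_{\GGo}$ exactly. The guiding observation is that $r_*$ and $s_*$ convert the indicator function of a bisection into the indicator functions of its range and source, so the constraint coming from \cref{prop: ample rep image} will force $r(B)$ and $s(B)$ to exhaust $\GGo$.

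First I would record the identities $r_*(1_B) = 1_{r(B)}$ and $s_*(1_B) = 1_{s(B)}$ (already established in the proof of \cref{prop: ample rep image}): since $B$ is a bisection, $B \cap \GG^u$ is a singleton when $u \in r(B)$ and empty otherwise, so $r_*(1_B)(u) = \abs{B \cap \GG^u} = 1_{r(B)}(u)$, and symmetrically for $s_*$. Then, since $1_B \in \pi(\C F(\GG))$ by hypothesis, \cref{prop: ample rep image}\cref{item: r_* and s_* of ample rep image} supplies a scalar $\lambda \in \C$ with $1_{r(B)} = r_*(1_B) = \lambda 1_{\GGo}$. As $B$ is nonempty, $r(B) \ne \varnothing$, so evaluating at any $u \in r(B)$ yields $\lambda = 1$, whence $1_{r(B)} = 1_{\GGo}$ and therefore $r(B) = \GGo$. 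Running the identical argument with $s_*$ gives $s(B) = \GGo$, so $B$ is full and hence $B \in F(\GG)$.

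There is no serious obstacle once \cref{prop: ample rep image} is available; the only subtlety is the use of the nonemptiness of $B$ to exclude the degenerate scalar $\lambda = 0$, which corresponds precisely to the excluded case $B = \varnothing$ and explains why that hypothesis appears in the statement.
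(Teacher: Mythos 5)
Your proof is correct and follows essentially the same route as the paper: both apply \cref{prop: ample rep image}\cref{item: r_* and s_* of ample rep image} together with the identities $r_*(1_B) = 1_{r(B)}$ and $s_*(1_B) = 1_{s(B)}$, using nonemptiness of $B$ to rule out the zero scalar and conclude $r(B) = \GGo = s(B)$. The only difference is that you spell out the evaluation argument pinning the scalar to $1$, which the paper leaves implicit.
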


\begin{proof}
Let $B$ is a nonempty compact open bisection of $\GG$ such that $1_B \in \pi(\C F(\GG))$. By \cref{prop: ample rep image}\cref{item: r_* and s_* of ample rep image}, we know that $1_{r(B)} = r_*(1_B)$ and $1_{s(B)} = s_*(1_B)$ are both nonzero elements of $\C 1_{\GGo}$. It follows that $r(B) = \GGo = s(B)$, and hence $B \in F(\GG)$.
\end{proof}

The following result is an immediate corollary of \cref{prop: TFG and im(pi)}, because $A(\GG)$ is the span of characteristic functions on compact open bisections of $\GG$.

\begin{cor} \label{cor: ample rep not surj}
Let $\GG$ be an ample Hausdorff groupoid with compact unit space $\GGo$. If there exists a nonempty compact open bisection $B$ of $\GG$ such that $B \notin F(\GG)$, then the representation $\pi\colon \C F(\GG) \to A(\GG)$ is not surjective.
\end{cor}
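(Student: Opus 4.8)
The plan is to exhibit a single explicit element of $A(\GG)$ that fails to lie in $\pi(\C F(\GG))$, using the bisection $B$ supplied by the hypothesis. The natural candidate is the characteristic function $1_B$ itself. Since $B$ is a compact open bisection of $\GG$, the function $1_B$ belongs to $A(\GG)$ by the very definition of the Steinberg algebra as $\vecspan\{ 1_U \mid U \text{ a compact open bisection of } \GG\}$; so $1_B$ is a legitimate target to test surjectivity against.

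The key step is to invoke \Cref{prop: TFG and im(pi)} in contrapositive form. That proposition asserts that whenever a nonempty compact open bisection $B$ satisfies $1_B \in \pi(\C F(\GG))$, then $B \in F(\GG)$. Reading this the other way, since we are given a nonempty compact open bisection $B$ with $B \notin F(\GG)$, we immediately conclude that $1_B \notin \pi(\C F(\GG))$. Combining this with the observation of the previous paragraph, we have produced an element $1_B$ of $A(\GG)$ that is not in the image of $\pi$, so $\pi$ is not surjective.

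I do not anticipate any genuine obstacle here: the result is an immediate corollary of \Cref{prop: TFG and im(pi)}, and the entire argument is a one-line application of its contrapositive together with the fact that characteristic functions of compact open bisections lie in $A(\GG)$. The only point worth stating carefully is the membership $1_B \in A(\GG)$, which is where the requirement that $B$ be a \emph{compact open} bisection (and nonempty, so that $1_B \ne 0$) is used; everything else is packaged into \Cref{prop: TFG and im(pi)}.
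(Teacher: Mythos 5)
Your proof is correct and is essentially the paper's own argument: the paper also deduces the corollary immediately from the contrapositive of \Cref{prop: TFG and im(pi)}, noting that $1_B$ lies in $A(\GG)$ but not in $\pi(\C F(\GG))$.
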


As the following corollary shows, it turns out that the hypothesis of \cref{cor: ample rep not surj} is very easily satisfied, as it holds whenever $\GG$ is not a group.

\begin{cor} \label{cor: surj iff group}
Let $\GG$ be an ample Hausdorff groupoid with compact unit space $\GGo$. The representation $\pi\colon \C F(\GG) \to A(\GG)$ is surjective if and only if $\GG$ is a group.
\end{cor}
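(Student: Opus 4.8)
The plan is to prove the two implications separately, leaning on \cref{cor: ample rep not surj} for the substantive direction, so that almost no new work is required.

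For the forward implication, I would suppose that $\GG$ is a group. As recorded in the preliminaries, a groupoid is a group exactly when $\GGo$ is a singleton, and in that case $F(\GG) = \BcoG = \{\{g\} \mid g \in \GG\} \cong \GG$, while $A(\GG) = \C\GG$ is spanned by the point masses $\delta_g = 1_{\{g\}}$. Since $\pi(\delta_{\{g\}}) = 1_{\{g\}} = \delta_g$, the image of $\pi$ contains every generator of $A(\GG)$, so $\pi$ is surjective (indeed an isomorphism, by \cref{thm: main}). This step is immediate.

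For the converse I would argue the contrapositive: if $\GG$ is not a group, then $\pi$ is not surjective. By \cref{cor: ample rep not surj} it suffices to exhibit a single nonempty compact open bisection $B$ with $B \notin F(\GG)$, that is, a nonempty compact open bisection that fails to be full. The natural place to find one is inside the unit space. Since $\GG$ is not a group, $\GGo$ is not a singleton; since $\GG$ is ample, $\GGo$ is totally disconnected; and together with the standing hypothesis that $\GGo$ is compact Hausdorff, this makes $\GGo$ a Stone space. Choosing distinct units $u \ne v$ and separating them by a compact open neighbourhood of $u$ that avoids $v$ (using that the compact open sets form a basis of the Hausdorff space $\GGo$) yields a proper nonempty clopen subset $B \subsetneq \GGo$.

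It then remains to check that this $B$ does the job. Every subset of $\GGo$ is a bisection, because $r$ and $s$ restrict to the identity on the unit space; and $B$, being open in $\GGo$ and hence open in $\GG$ (as $\GGo$ is open in the \'etale groupoid $\GG$) as well as compact, is a compact open bisection. Finally $r(B) = s(B) = B \ne \GGo$, so $B$ is not full and thus $B \notin F(\GG)$, whence \cref{cor: ample rep not surj} gives the conclusion. I do not expect a genuine obstacle here: the real content was already extracted in \cref{prop: TFG and im(pi)} and \cref{cor: ample rep not surj}, and all that is left is the elementary topological fact that a compact Hausdorff totally disconnected space with more than one point admits a proper nonempty clopen subset.
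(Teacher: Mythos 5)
Your proposal is correct and follows essentially the same route as the paper: the group direction is the same immediate observation, and for the converse the paper likewise separates two distinct units by compact open subsets of $\GGo$ to produce a nonempty compact open bisection that is not full, then invokes \cref{cor: ample rep not surj}. The only cosmetic difference is that you take a single compact open neighbourhood of $u$ avoiding $v$ rather than a disjoint pair, which changes nothing.
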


\begin{proof}
If $\GG$ is a group, then $F(\GG) \cong \GG$, so $\C F(\GG) = A(\GG)$, and $\pi\colon \C F(\GG) \to A(\GG)$ is the identity map and hence is surjective. For the converse, suppose that $\GG$ is not a group, and fix distinct units $u, v \in \GGo$. Since $\GG$ is an ample Hausdorff groupoid, there exist disjoint compact open sets $U, V \subseteq \GGo$ containing $u$ and $v$, respectively. But then $v \notin U$, so $U \in \BcoG {\setminus} F(\GG)$, and hence \cref{cor: ample rep not surj} implies that $\pi\colon \C F(\GG) \to A(\GG)$ is not surjective.
\end{proof}

\section{Representations of topological full groups of discrete groupoids}
\label{sec: discrete groupoids}

In this section we restrict our attention to discrete groupoids, and to the images of the representations of their topological full groups in the full and reduced groupoid C*-algebras. In particular, we prove an analogue of \cref{cor: surj iff group} for the extension of the representation $\pi$ with respect to the full C*-norm (see \cref{thm: discrete full iff group}), and we show in \cref{eg: F_2 sqcup F_2} that \cref{thm: discrete full iff group} does not hold in the reduced setting. We conclude the section by connecting our results from \cref{sec: injectivity,sec: surjectivity,sec: discrete groupoids} in \cref{cor: isomorphism}.

Let $\GG$ be a discrete groupoid with finite unit space $\GGo = \{a_1, \dotsc, a_n\}$. Recall that, for a groupoid $\GG$ and $a, b \in \GGo$, we define $\GG_b^a \coloneqq \{ \gamma \in \GG \mid r(\gamma) = a \text{ and } s(\gamma) = b \}$. Thus $\PG \coloneqq \big\{ \GG_{a_j}^{a_i} : i, j \in \{1, \dotsc, n\} \big\}$ is a partition of $\GG$ into disjoint sets. For $\gamma \in \GG$, write $1_\gamma \coloneqq 1_{\{\gamma\}} \in A(\GG)$. Given $f \in A(\GG)$ and $i, j \in \{1, \dotsc, n\}$, we define a map $f_{i,j}\colon \GG \to \C$ by
\[
f_{i,j}(\gamma) \coloneqq \begin{cases}
f(\gamma) & \text{if } \gamma \in \GG_{a_j}^{a_i} \\
0 & \text{otherwise}.
\end{cases}
\]
Then each $f_{i,j} \in A(\GG)$, and since $\PG$ is a partition of $\GG$, it follows that $f = \displaystyle\sum_{i,j=1}^n f_{i,j}$.

Define $T\colon A(\GG) \to M_n(\C)$ by
\[
T(f)_{ij} \coloneqq \sum_{\gamma \in \GG_{a_j}^{a_i}} f(\gamma), \ \text{ for each } i, j \in \{1, \dotsc, n\}.
\]
We will use this map $T$ to study the image of the representation $\pi\colon \C F(\GG) \to A(\GG)$. We first show that $T$ is a $*$-representation of $A(\GG)$.

\begin{lemma} \label{lem: T *-rep}
Let $\GG$ be a discrete groupoid with finite unit space $\GGo \coloneqq \{a_1, \dotsc, a_n\}$. The map $T\colon A(\GG) \to M_n(\C)$ defined above is a $*$-representation of $A(\GG)$.
\end{lemma}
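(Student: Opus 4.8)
The goal is to verify that $T\colon A(\GG) \to M_n(\C)$ is a $*$-representation, which means I need to check three things: that $T$ is linear, that $T$ is multiplicative (i.e.\ $T(f*g) = T(f)T(g)$), and that $T$ is $*$-preserving (i.e.\ $T(f^*) = T(f)^*$, where the right-hand side is the conjugate transpose). Linearity is immediate from the defining formula, since $T(f)_{ij}$ is a finite sum (the unit space is finite, and elements of $A(\GG) = C_c(\GG)$ have finite support as $\GG$ is discrete) that depends linearly on $f$. So the real content is multiplicativity and the $*$-identity.

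For multiplicativity, the plan is to compute both sides of $T(f*g)_{ij}$ directly from the definitions. On the one hand, $T(f*g)_{ij} = \sum_{\gamma \in \GG_{a_j}^{a_i}} (f*g)(\gamma) = \sum_{\gamma \in \GG_{a_j}^{a_i}} \sum_{\alpha\beta = \gamma} f(\alpha)g(\beta)$. The key observation is that if $\gamma \in \GG_{a_j}^{a_i}$ and $\gamma = \alpha\beta$, then $r(\alpha) = r(\gamma) = a_i$ and $s(\beta) = s(\gamma) = a_j$, while the shared unit $s(\alpha) = r(\beta)$ must be some $a_k \in \GGo$. Reindexing the double sum by first choosing the intermediate unit $a_k$ and then summing over $\alpha \in \GG_{a_k}^{a_i}$ and $\beta \in \GG_{a_j}^{a_k}$ (noting each such pair is automatically composable and determines a unique $\gamma = \alpha\beta \in \GG_{a_j}^{a_i}$) yields
\[
T(f*g)_{ij} = \sum_{k=1}^n \Big( \sum_{\alpha \in \GG_{a_k}^{a_i}} f(\alpha) \Big)\Big( \sum_{\beta \in \GG_{a_j}^{a_k}} g(\beta) \Big) = \sum_{k=1}^n T(f)_{ik}\, T(g)_{kj} = (T(f)T(g))_{ij}.
\]
The crux here is justifying the reindexing of the double sum as a sum over the intermediate unit $a_k$, which relies on the partition $\PG$ of $\GG$ and on the fact that composability of $\alpha\beta$ forces $s(\alpha)=r(\beta)$ to lie in $\GGo$.

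For the $*$-identity, I would compute $T(f^*)_{ij} = \sum_{\gamma \in \GG_{a_j}^{a_i}} f^*(\gamma) = \sum_{\gamma \in \GG_{a_j}^{a_i}} \overline{f(\gamma^{-1})}$. Since $\gamma \in \GG_{a_j}^{a_i}$ if and only if $\gamma^{-1} \in \GG_{a_i}^{a_j}$, the substitution $\gamma \mapsto \gamma^{-1}$ is a bijection $\GG_{a_j}^{a_i} \to \GG_{a_i}^{a_j}$, giving $T(f^*)_{ij} = \sum_{\eta \in \GG_{a_i}^{a_j}} \overline{f(\eta)} = \overline{T(f)_{ji}} = (T(f)^*)_{ij}$, as required.

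**Main obstacle.** None of the three steps is genuinely hard; the work is entirely bookkeeping with the index conventions. The step most prone to error — and the one I would write out most carefully — is the multiplicativity reindexing, where one must confirm that the map $(\alpha,\beta) \mapsto \alpha\beta$ sets up a bijection between $\bigsqcup_{k} \big(\GG_{a_k}^{a_i} \times \GG_{a_j}^{a_k}\big)$ and the set of composable factorisations of elements of $\GG_{a_j}^{a_i}$, so that no terms are double-counted or omitted. This is exactly the groupoid analogue of matrix multiplication, and once the intermediate-unit decomposition is made explicit the identity $T(f*g) = T(f)T(g)$ falls out cleanly.
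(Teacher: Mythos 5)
Your proposal is correct and follows essentially the same route as the paper's proof: linearity is immediate, multiplicativity is the intermediate-unit reindexing $T(f*g)_{ij} = \sum_{k=1}^n T(f)_{ik}\,T(g)_{kj}$, and the $*$-identity comes from the bijection $\gamma \mapsto \gamma^{-1}$ between $\GG_{a_j}^{a_i}$ and $\GG_{a_i}^{a_j}$. Your extra care in justifying the reindexing as a bijection onto composable factorisations is a welcome elaboration of a step the paper leaves implicit, but it is not a different argument.
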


\begin{proof}
It is straightforward to verify that $T$ is linear. Fix $f, g \in A(\GG)$. For all $i, j \in \{1, \dotsc, n\}$, we have
\begin{align*}
T(f * g)_{ij} &= \sum_{\gamma \in \GG_{a_j}^{a_i}} (f * g)(\gamma) = \sum_{\gamma \in \GG_{a_j}^{a_i}} \sum_{\alpha\beta = \gamma} f(\alpha) g(\beta) \\
&= \sum_{k=1}^n \sum_{\alpha \in \GG_{a_k}^{a_i}} f(\alpha) \sum_{\beta \in \GG_{a_j}^{a_k}} g(\beta) = \sum_{k=1}^n T(f)_{ik} \, T(g)_{kj} = \big(T(f) T(g)\big)_{ij},
\end{align*}
and
\[
T(f^*)_{ij} = \sum_{\gamma \in \GG_{a_j}^{a_i}} f^*(\gamma) = \sum_{\gamma \in \GG_{a_j}^{a_i}} \overline{f(\gamma^{-1})} = \overline{\sum_{\eta \in \GG_{a_i}^{a_j}} f(\eta)} = \overline{T(f)_{ji}} = \big(T(f)^*\big)_{ij}.
\]
Thus $T(f * g) = T(f) T(g)$ and $T(f^*) = T(f)^*$, and so $T$ is a $*$-homomorphism.
\end{proof}

The following result is a corollary of \cref{prop: ample rep image}\cref{item: ample rep image in ker delta_1}.

\begin{cor} \label{cor: discrete image rep}
Let $\GG$ be a discrete groupoid with finite unit space $\GGo \coloneqq \{a_1, \dotsc, a_n\}$. Then
\[
\pi(\C F(\GG)) \subseteq \big\{ f \in A(\GG) \mid \exists \, c_f \in \C \text{ such that all row and column sums of } T(f) \text{ are } c_f \big\}.
\]
\end{cor}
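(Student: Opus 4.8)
The plan is to deduce this directly from \cref{prop: ample rep image}\cref{item: ample rep image in ker delta_1} by recognising the row sums and column sums of $T(f)$ as the values of $r_*f$ and $s_*f$ on the units $a_1, \dotsc, a_n$.

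First I would fix an arbitrary $f \in A(\GG)$ and compute, for each $i$, the $i$-th row sum $\sum_{j=1}^n T(f)_{ij}$. Since $\GGo$ is finite and $\GG$ is discrete, the set $\GG^{a_i} = r^{-1}(a_i)$ is the disjoint union $\bigsqcup_{j=1}^n \GG_{a_j}^{a_i}$, so swapping the two finite sums yields $\sum_{j=1}^n T(f)_{ij} = \sum_{j=1}^n \sum_{\gamma \in \GG_{a_j}^{a_i}} f(\gamma) = \sum_{\gamma \in \GG^{a_i}} f(\gamma) = r_*f(a_i)$. By the symmetric computation, using the partition $\GG_{a_j} = \bigsqcup_{i=1}^n \GG_{a_j}^{a_i}$ of $s^{-1}(a_j)$, the $j$-th column sum $\sum_{i=1}^n T(f)_{ij}$ equals $s_*f(a_j)$.

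Having established this bookkeeping identity between $T$ and the maps $r_*, s_*$, I would then restrict to $f \in \pi(\C F(\GG))$ and invoke \cref{prop: ample rep image}\cref{item: ample rep image in ker delta_1}, which asserts that $r_*f(u) = s_*f(v)$ for all $u, v \in \GGo$. This gives a single constant $c_f \in \C$ with $r_*f(a_i) = c_f = s_*f(a_j)$ for all $i, j \in \{1, \dotsc, n\}$. Combined with the previous step, this says precisely that every row sum and every column sum of $T(f)$ equals $c_f$, which is the desired conclusion.

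I do not expect any genuine obstacle here: the entire content is the partition identity expressing row and column sums of $T(f)$ via $r_*$ and $s_*$, after which the statement is an immediate specialisation of the proposition. The only point worth stating carefully is the decomposition of $\GG^{a_i}$ (respectively $\GG_{a_j}$) over the finite index set $\{1, \dotsc, n\}$, since it is this finiteness that justifies interchanging the order of summation and lets us identify the finite row and column sums with the (a priori infinite) sums defining $r_*f$ and $s_*f$.
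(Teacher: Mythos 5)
Your proposal is correct and follows essentially the same route as the paper's proof: both identify the $i$\textsuperscript{th} row sum of $T(f)$ with $r_*f(a_i)$ and the $j$\textsuperscript{th} column sum with $s_*f(a_j)$ via the partition of $\GG^{a_i}$ (respectively $\GG_{a_j}$) into the sets $\GG_{a_j}^{a_i}$, and then invoke \cref{prop: ample rep image}\cref{item: ample rep image in ker delta_1} to conclude that all these values coincide for $f \in \pi(\C F(\GG))$. No gaps.
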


\begin{proof}
Fix $f = \displaystyle\sum_{i,j=1}^n f_{i,j} \in \pi(\C F(\GG))$. Then, for each $i, j \in \{1, \dotsc, n\}$,
\[
\text{the $i$\textsuperscript{th} row sum of } T(f) = \sum_{k=1}^n T(f)_{ik} = \sum_{k=1}^n \sum_{\gamma \in \GG_{a_k}^{a_i}} f(\gamma) = \sum_{\gamma \in \GG^{a_i}} f(\gamma) = r_*f(a_i),
\]
and
\[
\text{the $j$\textsuperscript{th} column sum of } T(f) = \sum_{k=1}^n T(f)_{kj} = \sum_{k=1}^n \sum_{\gamma \in \GG_{a_j}^{a_k}} f(\gamma) = \sum_{\gamma \in \GG_{a_j}} f(\gamma) = s_*f(a_j).
\]
By \cref{prop: ample rep image}\cref{item: ample rep image in ker delta_1}, it follows that for all $i, j \in \{1, \dotsc, n\}$,
\[
\text{the $i$\textsuperscript{th} row sum of } T(f) \,=\, \text{the $j$\textsuperscript{th} column sum of } T(f). \qedhere
\]
\end{proof}

We now use \cref{cor: discrete image rep} to study the completions of $\pi(\C F(\GG))$ in the full and reduced groupoid C*-algebras. In \cref{thm: discrete full iff group} we prove that for a discrete groupoid $\GG$, an analogue of \cref{cor: surj iff group} holds for the full groupoid C*-algebra $C^*(\GG)$.

\begin{thm} \label{thm: discrete full iff group}
Let $\GG$ be a discrete groupoid with finite unit space $\GGo$. Then
\[
\overline{\pi(\C F(\GG))}^{\maxnorm{\cdot}} = C^*(\GG)
\]
if and only if $\GG$ is a group.
\end{thm}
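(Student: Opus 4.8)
The plan is to prove both directions, with the forward (nontrivial) direction relying on the structure of $C^*(\GG)$ for a finite discrete groupoid. First, if $\GG$ is a group, then $F(\GG) \cong \GG$ and $\C F(\GG) = A(\GG) = C_c(\GG)$, which is dense in $C^*(\GG)$ by definition; so $\overline{\pi(\C F(\GG))}^{\maxnorm{\cdot}} = C^*(\GG)$ trivially. The substance is the converse: I would prove the contrapositive, showing that if $\GG$ is not a group, then $\pi(\C F(\GG))$ is \emph{not} dense in $C^*(\GG)$ with respect to $\maxnorm{\cdot}$. Since $\GGo$ is finite with $\abs{\GGo} = n$, the failure of $\GG$ being a group means $n \ge 2$.

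The key idea is to exploit \cref{cor: discrete image rep}, which constrains every element of $\pi(\C F(\GG))$ via the $*$-representation $T$: every $f$ in the image has the property that all row and column sums of $T(f)$ share a common value $c_f$. Since $T$ is a $*$-representation of $A(\GG) = C_c(\GG)$ (by \cref{lem: T *-rep}) and $\GG$ is discrete with finite unit space, $\GG$ is ample and $A(\GG) = C_c(\GG)$; one then extends $T$ to a $*$-representation $\overline{T}$ of $C^*(\GG)$ using the universal property of the full C*-norm (the point is that $T$ is bounded because it lands in the finite-dimensional C*-algebra $M_n(\C)$, so $\norm{T(f)} \le \maxnorm{f}$ for all $f$). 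The constraint ``all row and column sums of $T(f)$ equal a common scalar $c_f$'' defines a \emph{closed} linear subspace $\mathcal{S}$ of $M_n(\C)$, and $T(\pi(\C F(\GG))) \subseteq \mathcal{S}$. Because $\overline{T}$ is $\maxnorm{\cdot}$-to-norm continuous and $\mathcal{S}$ is closed, it follows that $\overline{T}\big(\overline{\pi(\C F(\GG))}^{\maxnorm{\cdot}}\big) \subseteq \mathcal{S}$.

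To conclude, I must exhibit an element of $C^*(\GG)$ that $\overline{T}$ sends outside $\mathcal{S}$. The natural candidate is a unit point-mass: since $n \ge 2$, pick a unit $a_i \in \GGo$ and consider $1_{a_i} = 1_{\{a_i\}} \in A(\GG) \subseteq C^*(\GG)$. Then $T(1_{a_i})$ is the matrix unit $E_{ii}$, whose $i$\textsuperscript{th} row sum is $1$ but whose other row sums are $0$; since $n \ge 2$, these are not all equal, so $E_{ii} \notin \mathcal{S}$. Hence $1_{a_i} \notin \overline{\pi(\C F(\GG))}^{\maxnorm{\cdot}}$, and the image is not dense, completing the contrapositive.

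The main obstacle is verifying the extension step cleanly: one must confirm that $T$ genuinely descends to a $*$-representation of $C^*(\GG)$ that is continuous for $\maxnorm{\cdot}$, so that the closed-subspace containment survives passage to the $\maxnorm{\cdot}$-closure. This follows from the defining property of $\maxnorm{\cdot}$ as a supremum over all $*$-representations (so that $\norm{T(f)} \le \maxnorm{f}$ automatically) together with the finite-dimensionality of $M_n(\C)$; once $\overline{T}$ is in hand, the rest is the straightforward linear-algebra observation that $E_{ii}$ violates the row-sum constraint. I would take care to state explicitly that $\mathcal{S}$ is closed (it is an intersection of kernels of continuous linear functionals on the finite-dimensional space $M_n(\C)$) so that the continuity argument applies.
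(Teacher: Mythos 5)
Your proposal is correct and follows essentially the same route as the paper: both use the $*$-representation $T\colon A(\GG) \to M_n(\C)$, its $\maxnorm{\cdot}$-boundedness via \cref{eqn: Steinberg *-reps are bounded}, and the row/column-sum constraint of \cref{cor: discrete image rep} to show a point mass cannot lie in the closure of the image. The only (cosmetic) differences are that the paper phrases the continuity step as a sequence argument rather than via a closed subspace of $M_n(\C)$, and excludes $1_\gamma$ for \emph{every} $\gamma \in \GG$ rather than just a unit $1_{a_i}$.
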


\begin{proof}
If $\GG$ is a group, then $F(\GG) \cong \GG$, so $\C F(\GG) = A(\GG)$, and hence
\[
\overline{\pi(\C F(\GG))}^{\maxnorm{\cdot}} = \overline{A(\GG)}^{\maxnorm{\cdot}} = C^*(\GG).
\]

Suppose that $\GG$ is not a group. We show that $\overline{\pi(\C F(\GG))}^{\maxnorm{\cdot}} \ne C^*(\GG)$ by proving an even stronger result: that $1_\gamma \notin \overline{\pi(\C F(\GG))}^{\maxnorm{\cdot}}$ for each $\gamma \in \GG$. Write $\GGo = \{a_1, \dotsc, a_n\}$, and note that $n \ge 2$ since $\GG$ is not a group. Fix $\gamma \in \GG$, and suppose for contradiction that $1_\gamma \in \overline{\pi(\C F(\GG))}^{\maxnorm{\cdot}}$. Then there exists a sequence $(\varphi_m)_{m=0}^\infty$ of functions in $\pi(\C F(\GG))$ such that $\maxnorm{\varphi_m - 1_\gamma} \to 0$ as $m \to \infty$. By \cref{lem: T *-rep}, $T\colon A(\GG) \to M_n(\C)$ is a $*$-representation of $A(\GG)$, and hence \cref{eqn: Steinberg *-reps are bounded} on \cpageref{eqn: Steinberg *-reps are bounded} implies that $T$ is bounded. Thus
\begin{equation} \label{eqn: convergence of T(phi_m)}
\norm{T(\varphi_m) - T(1_\gamma)}_{M_n(\C)} \,=\, \norm{T(\varphi_m - 1_\gamma)}_{M_n(\C)} \,\to\, 0 \quad \text{as } m \to \infty.
\end{equation}
Let $\ell$ and $k$ be the unique elements of $\{1, \dotsc, n\}$ such that $\gamma \in \GG_{a_k}^{a_\ell}$. Note that each $T(\varphi_m)$ has $n \ge 2$ rows, and it follows from \cref{eqn: convergence of T(phi_m)} that for each $i \in \{1, \dotsc, n\}$, we have
\[
i^\textsuperscript{th} \text{ row sum of } T(\varphi_m) \,\to\, i^\textsuperscript{th} \text{ row sum of } T(1_\gamma) \,=\, \sum_{j=1}^n T(1_\gamma)_{ij} \,=\, T(1_\gamma)_{ik} \,=\,
\begin{cases}
1 & \text{if } i = \ell \\
0 & \text{otherwise}
\end{cases}
\]
as $m \to \infty$. But this contradicts \cref{cor: discrete image rep}, which says that for each $m \in \N$, all of the row (and column) sums of $T(\varphi_m)$ are equal. So we must have $1_\gamma \notin \overline{\pi(\C F(\GG))}^{\maxnorm{\cdot}}$.
\end{proof}

\begin{remark} \label{rem: Cuntz groupoid}
It is known that \cref{thm: discrete full iff group} does not hold for ample Hausdorff groupoids in general. For example, if $\GG$ is the Cuntz groupoid (that is, the boundary-path groupoid of the directed graph with a single vertex and two edges), then $F(\GG)$ is Thompson's group $V_2$, and the representation $\pi\colon \C(F(\GG)) \to A(\GG)$ extends to a surjective representation of $F(\GG)$ in the Cuntz algebra $\OO_2$; see \cite[Remark~4.7]{BS2019} and \cite[Proposition~5.3]{HO2017}.
\end{remark}

It turns out that \cref{thm: discrete full iff group} does not hold in the reduced setting. We provide an example demonstrating this fact below.

\begin{example} \label{eg: F_2 sqcup F_2}
Let $\GG = \F_2 \sqcup \F_2$. Then each element of $\GG$ is of the form $(g,k)$, where $g \in \F_2$, and $k \in \{1, 2\}$ identifies whether $g$ belongs to the first or the second copy of $\F_2$. Since $\GG$ is not a group, we know by \cref{thm: discrete full iff group} that $\overline{\pi(\C F(\GG))}^{\maxnorm{\cdot}} \ne C^*(\GG)$. We show that despite this, we still have $\overline{\pi(\C F(\GG))}^{\rnorm{\cdot}} = C_r^*(\GG)$. To do so, it suffices to show that for each $g \in \F_2$, we have $1_{(g,1)} \in \overline{\pi(\C F(\GG))}^{\rnorm{\cdot}}$, because a symmetric argument then shows that $1_{(g,2)} \in \overline{\pi(\C F(\GG))}^{\rnorm{\cdot}}$. Fix $t \in \F_2$, and for each $m \in \N$, let $E_m$ denote the set of (reduced) elements of $\F_2$ with length $m$. List elements of $\F_2$ in increasing order of their lengths; that is, write $\F_2 = \{g_1, g_2, g_3, \dotsc \}$, with $\abs{g_i} \le \abs{g_{i+1}}$ for all $i \ge 1$. Now define a sequence of functions $(\phi_n)_{n=1}^\infty \subseteq \pi(\C F(\GG)) \subseteq A(\GG)$ by
\[
\phi_n \coloneqq \pi\Big( \delta_{(t,1)} + \sum_{i=1}^n \tfrac{1}{n} \, \delta_{(g_i,2)} \Big) = 1_{(t,1)} + \frac{1}{n} \Big( \sum_{i=1}^n 1_{(g_i, 2)} \Big).
\]
We claim that $\phi_n \to 1_{(t,1)}$ in $C^*_r(\GG)$. Since the map $1_{g_i} \mapsto 1_{(g_i,2)}$ extends to an embedding of $C_r^*(\F_2)$ in $C_r^*(\GG)$, it suffices to show that $\psi_n \coloneqq \dfrac{1}{n} \Big( \displaystyle\sum_{i=1}^n 1_{g_i} \Big) \to 0$ in $C_r^*(\F_2)$.

By \cite[Lemma~1.5]{Haagerup1978}, we know that for all $f \in C_c(\F_2)$,
\begin{equation} \label[inequality]{ineq: Haagerup's inequality}
\rnorm{f} \,\le\, 2 \, \Big( \sum_{s \in \F_2} \abs{f(s)}^2 \, \big( 1 + \abs{s}^4 \big) \Big)^{\tfrac{1}{2}}.
\end{equation}
For each $m \ge 1$, we have $\abs{E_m} = 4 \times 3^{m-1}$. Thus, for each $n \ge 1$, we have
\[
\sum_{m=0}^{\ceil{\log_3{n}}} \abs{E_m} \,=\, \abs{E_0} + 4 \sum_{m=1}^{\ceil{\log_3{n}}} 3^{m-1} \,=\, 1 + \frac{4\big(3^{\ceil{\log_3{n}}} - 1\big)}{3 - 1} \,\ge\, 1 + \frac{4(n - 1)}{2} \,\ge\, n,
\]
and it follows that $\supp(\psi_n) = \{g_1, \dotsc, g_n\} \subseteq \bigcup_{m=0}^{\ceil{\log_3{n}}} E_m$.

Now, for each $n \ge 1$, \cref{ineq: Haagerup's inequality} implies that
\begin{align*}
\rnorm{\psi_n} \,&\le\, 2 \, \Big( \sum_{s \in \F_2} \abs{\psi_n(s)}^2 \, \big( 1 + \abs{s}^4 \big) \Big)^{\tfrac{1}{2}} \,=\, 2 \, \Big( \sum_{m=0}^{\ceil{\log_3{n}}} \sum_{s \in E_m} \abs{\psi_n(s)}^2 \, \big( 1 + \abs{s}^4 \big) \Big)^{\tfrac{1}{2}} \\
&\le\, 2 \, \Big( \sum_{m=0}^{\ceil{\log_3{n}}} \frac{\abs{E_m}}{n^2} \, (1 + m^4) \Big)^{\tfrac{1}{2}} \,\le\, 2 \, \Big( \sum_{m=0}^{\ceil{\log_3{n}}} \frac{4 \times 3^{m-1} \times 2m^4}{n^2} \Big)^{\tfrac{1}{2}} \\
&\le\, 2 \, \Big( \frac{8 \ceil{\log_3{n}}^4}{n^2} \sum_{m=0}^{\ceil{\log_3{n}}} 3^{m-1} \Big)^{\tfrac{1}{2}} \,=\, 2 \, \Big( \frac{8 \ceil{\log_3{n}}^4 \, \big( 3^{1 + \ceil{\log_3{n}}} - \tfrac{1}{3} \big)}{n^2 \, (3 - 1)} \Big)^{\tfrac{1}{2}} \\
&\le\, \frac{4 \ceil{\log_3{n}}^2}{n} \big( 3^{2 + \log_3{n}}\big)^{\tfrac{1}{2}} \,=\, \frac{12 \ceil{\log_3{n}}^2}{\sqrt{n}}.
\end{align*}
Since $\dfrac{12 \ceil{\log_3{n}}^2}{\sqrt{n}} \,\to\, 0$ as $n \to \infty$, we deduce that $\psi_n \to 0$ in $C_r^*(\F_2)$, as required.
\end{example}

We conclude the paper with a corollary of \cref{thm: main,cor: surj iff group,thm: discrete full iff group}.

\begin{cor} \label{cor: isomorphism}
Let $\GG$ be an ample Hausdorff groupoid with compact unit space $\GGo$. The representation $\pi\colon \C F(\GG) \to A(\GG)$ is an isomorphism if and only if $\GG$ is a group. Similarly, if $\GG$ is discrete, then the extension $\overline{\pi}_{\max}\colon C^*(F(\GG)) \to C^*(\GG)$ of $\pi$ is an isomorphism if and only if $\GG$ is a group.
\end{cor}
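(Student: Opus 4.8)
The plan is to obtain each biconditional by treating its two implications separately and assembling the three cited results; all three enter in the backward directions, while the forward directions will reduce to surjectivity alone.

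For the backward directions, I would suppose $\GG$ is a group. Then $\GGo$ is a singleton, so $\GG = \Iso(\GG)$ and $\GG$ has at most one nontrivial isotropy group; thus \cref{cond: all isotropy} of \cref{thm: main} holds and $\pi$ is injective, while \cref{cor: surj iff group} gives surjectivity, so $\pi$ is an isomorphism. (Concretely, $F(\GG) \cong \GG$ and $\C F(\GG) = A(\GG)$ with $\pi$ the identity map, which makes both facts transparent.) In the discrete case I would then pass to the full completions: since $C^*(F(\GG)) = C^*(\GG)$ when $\GG$ is a group, the extension $\overline{\pi}_{\max}$ is the identity map $C^*(\GG) \to C^*(\GG)$ and hence an isomorphism.

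For the forward direction of the first statement, I would simply note that an isomorphism is in particular surjective, so \cref{cor: surj iff group} immediately forces $\GG$ to be a group; injectivity plays no role here. For the forward direction of the second statement, the step requiring care is the identification of the range of $\overline{\pi}_{\max}$: since $\overline{\pi}_{\max}$ is a $*$-homomorphism of C*-algebras its range is closed, it contains $\pi(\C F(\GG))$, and by density of $\C F(\GG)$ in $C^*(F(\GG))$ together with continuity it is contained in $\overline{\pi(\C F(\GG))}^{\maxnorm{\cdot}}$; hence the range equals $\overline{\pi(\C F(\GG))}^{\maxnorm{\cdot}}$. Then if $\overline{\pi}_{\max}$ is an isomorphism it is surjective, so this closure is all of $C^*(\GG)$, and since $\GG$ discrete with compact unit space forces $\GGo$ to be finite, \cref{thm: discrete full iff group} applies and yields that $\GG$ is a group.

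I expect the main obstacle to be essentially bookkeeping, since the substantive content is already carried by \cref{thm: main,cor: surj iff group,thm: discrete full iff group}; the only genuinely technical point is the identification of the range of $\overline{\pi}_{\max}$ with the max-norm closure of $\pi(\C F(\GG))$, which rests on the standard fact that the image of a C*-algebra homomorphism is closed.
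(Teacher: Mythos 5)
Your proposal is correct and follows essentially the same route as the paper's proof: the backward directions via \cref{thm: main} (condition (1)) and the identity-map observation, and the forward directions via the contrapositives of \cref{cor: surj iff group} and \cref{thm: discrete full iff group}. The only difference is that you spell out the identification of the range of $\overline{\pi}_{\max}$ with $\overline{\pi(\C F(\GG))}^{\maxnorm{\cdot}}$ (via closedness of images of C*-homomorphisms), a step the paper leaves implicit; this is a correct and welcome clarification rather than a change of method.
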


\begin{proof}
If $\GG$ is a group, then $\GG$ satisfies \cref{cond: all isotropy} of \cref{thm: main}, so \cref{thm: main,cor: surj iff group} together imply that $\pi\colon \C F(\GG) \to A(\GG)$ is an isomorphism. If $\GG$ is not a group, then \cref{cor: surj iff group} implies that $\pi$ is not an isomorphism. Now suppose that $\GG$ is discrete. Since $\pi\colon \C F(\GG) \to C^*(\GG)$ is a $*$-homomorphism, it extends uniquely to a $*$\nobreakdash-homomorphism $\overline{\pi}_{\max}\colon C^*(F(\GG)) \to C^*(\GG)$. If $\GG$ is a group, then $F(\GG) \cong \GG$, so the representation $\pi\colon \C F(\GG) \to A(\GG)$ is the identity map, and thus the extension $\overline{\pi}_{\max}$ is an isomorphism. If $\GG$ is not a group, then \cref{thm: discrete full iff group} implies that $\overline{\pi}_{\max}$ is not an isomorphism.
\end{proof}

\vspace{2ex}
\bibliographystyle{amsplain}
\makeatletter\renewcommand\@biblabel[1]{[#1]}\makeatother
\bibliography{references}

\providecommand{\bysame}{\leavevmode\hbox to3em{\hrulefill}\thinspace}
\providecommand{\MR}{\relax\ifhmode\unskip\space\fi MR }
\providecommand{\MRhref}[2]{%
  \href{http://www.ams.org/mathscinet-getitem?mr=#1}{#2}
}
\providecommand{\href}[2]{#2}
\begin{thebibliography}{10}

\bibitem{ABHS2017}
P.~Ara, J.~Bosa, R.~Hazrat, and A.~Sims, \emph{Reconstruction of graded
  groupoids from graded {Steinberg} algebras}, Forum Math. \textbf{29} (2017),
  1023--1037, \doi{10.1515/forum-2016-0072}.

\bibitem{BHM2022}
J.~Belk, J.~Hyde, and F.~Matucci, \emph{Embedding {$\Q$} into a finitely
  presented group}, Bull. Amer. Math. Soc. (N.S.) \textbf{59} (2022), 561--567,
  \doi{10.1090/bull/1762}.

\bibitem{BS2019}
K.A. Brix and E.~Scarparo, \emph{{C*-simplicity} and representations of
  topological full groups of groupoids}, J. Funct. Anal. \textbf{277} (2019),
  2981--2996, \doi{10.1016/j.jfa.2019.06.014}.

\bibitem{CFST2014}
L.O. Clark, C.~Farthing, A.~Sims, and M.~Tomforde, \emph{A groupoid
  generalisation of {Leavitt} path algebras}, Semigroup Forum \textbf{89}
  (2014), 501--517, \doi{10.1007/s00233-014-9594-z}.

\bibitem{CZ2022}
L.O. Clark and J.~Zimmerman, \emph{A steinberg algebra approach to {\'etale}
  groupoid {C*-algebras}}, preprint, 2022, \doi{10.48550/arXiv.2203.00179}.

\bibitem{Exel2008}
R.~Exel, \emph{Inverse semigroups and combinatorial {C*-algebras}}, Bull. Braz.
  Math. Soc. (N.S.) \textbf{39} (2008), 191--313,
  \doi{10.1007/s00574-008-0080-7}.

\bibitem{Exel2010}
\bysame, \emph{Reconstructing a totally disconnected groupoid from its ample
  semigroup}, Proc. Amer. Math. Soc. \textbf{138} (2010), 2991--3001,
  \doi{10.1090/s0002-9939-10-10346-3}.

\bibitem{GPS1999}
T.~Giordano, I.F. Putnam, and C.F. Skau, \emph{Full groups of {Cantor} minimal
  systems}, Israel J. Math. \textbf{111} (1999), 285--320,
  \doi{10.1007/bf02810689}.

\bibitem{Haagerup1978}
U.~Haagerup, \emph{An example of a nonnuclear {C*-algebra}, which has the
  metric approximation property}, Invent. Math. \textbf{50} (1978), 279--293,
  \doi{10.1007/bf01410082}.

\bibitem{HO2017}
U.~Haagerup and K.K. Olesen, \emph{Non-inner amenability of the {T}hompson
  groups {$T$} and {$V$}}, J. Funct. Anal. \textbf{272} (2017), 4838--4852,
  \doi{10.1016/j.jfa.2017.02.003}.

\bibitem{JM2013}
K.~Juschenko and N.~Monod, \emph{Cantor systems, piecewise translations and
  simple amenable groups}, Ann. of Math. (2) \textbf{178} (2013), 775--787,
  \doi{10.4007/annals.2013.178.2.7}.

\bibitem{JNdlS2016}
K.~Juschenko, V.~Nekrashevych, and {M. de la} Salle, \emph{Extensions of
  amenable groups by recurrent groupoids}, Invent. Math. \textbf{206} (2016),
  837--867, \doi{10.1007/s00222-016-0664-6}.

\bibitem{LV2020}
M.V. Lawson and A.~Vdovina, \emph{Higher dimensional generalizations of the
  {T}hompson groups}, Adv. Math. \textbf{369} (2020), 1--56,
  \doi{10.1016/j.aim.2020.107191}.

\bibitem{LBMB2018}
A.~{Le Boudec} and N.~Matte Bon, \emph{Subgroup dynamics and {C*}-simplicity of
  groups of homeomorphisms}, Ann. Sci. \'Ec. Norm. Sup\'er. (4) \textbf{51}
  (2018), 557--602, \doi{10.24033/asens.2361}.

\bibitem{MM2017}
K.~Matsumoto and H.~Matui, \emph{{Full groups of Cuntz--Krieger algebras and
  Higman--Thompson groups}}, Groups Geom. Dyn. \textbf{11} (2017), 499--531,
  \doi{10.4171/GGD/405}.

\bibitem{Matui2012}
H.~Matui, \emph{Homology and topological full groups of {\'e}tale groupoids on
  totally disconnected spaces}, Proc. Lond. Math. Soc. \textbf{104} (2012),
  27--56, \doi{10.1112/plms/pdr029}.

\bibitem{Matui2015}
\bysame, \emph{Topological full groups of one-sided shifts of finite type}, J.
  Reine Angew. Math. \textbf{705} (2015), 35--84,
  \doi{10.1515/crelle-2013-0041}.

\bibitem{Matui2016}
\bysame, \emph{{\'E}tale groupoids arising from products of shifts of finite
  type}, Adv. Math. \textbf{303} (2016), 502--548,
  \doi{10.1016/j.aim.2016.08.023}.

\bibitem{Matui2017}
\bysame, \emph{Topological full groups of \'etale groupoids}, {Operator
  algebras and applications---the Abel Symposium 2015}, Abel Symp., vol.~12,
  Springer, [Cham], 2017, \doi{10.1007/978-3-319-39286-8_10}, pp.~203--230.

\bibitem{Nek2018}
V.~Nekrashevych, \emph{Palindromic subshifts and simple periodic groups of
  intermediate growth}, Ann. of Math. (2) \textbf{187} (2018), 667--719,
  \doi{10.4007/annals.2018.187.3.2}.

\bibitem{Nek2019}
\bysame, \emph{Simple groups of dynamical origin}, Ergodic Theory Dynam.
  Systems \textbf{39} (2019), 707--732, \doi{10.1017/etds.2017.47}.

\bibitem{NO2019}
P.~Nyland and E.~Ortega, \emph{Topological full groups of ample groupoids with
  applications to graph algebras}, Internat. J. Math. \textbf{30} (2019),
  1--66, \doi{10.1142/S0129167X19500186}.

\bibitem{Renault1980}
J.~Renault, \emph{{A Groupoid Approach to C*-Algebras}}, Lecture Notes in
  Math., vol. 793, Springer, Berlin, 1980, \doi{10.1007/bfb0091072}.

\bibitem{Scarparo2023}
E.~Scarparo, \emph{A dichotomy for topological full groups}, Canad. Math. Bull.
  \textbf{66} (2023), 610--616,
  \doi{https://doi.org/10.4153/S000843952200056X}.

\bibitem{Sims2020}
A.~Sims, \emph{Hausdorff \'etale groupoids and their {C*-algebras}}, Operator
  algebras and dynamics: groupoids, crossed products, and Rokhlin dimension
  (F.~Perera, ed.), Advanced Courses in Mathematics, CRM Barcelona,
  Birkh\"auser/Springer, 2020, \doi{10.1007/978-3-030-39713-5}.

\bibitem{SWZ2019}
R.~Skipper, S.~Witzel, and M.~Zaremsky, \emph{Simple groups separated by
  finiteness properties}, Invent. Math. \textbf{215} (2019), 713--740,
  \doi{10.1007/s00222-018-0835-8}.

\bibitem{Steinberg2010}
B.~Steinberg, \emph{A groupoid approach to discrete inverse semigroup
  algebras}, Adv. Math. \textbf{223} (2010), 689--727,
  \doi{10.1016/j.aim.2009.09.001}.

\bibitem{Yang2022}
D.~Yang, \emph{{Higman--Thompson-like groups of higher rank graph
  C*-algebras}}, Bull. Lond. Math. Soc. \textbf{54} (2022), 1470--1486,
  \doi{10.1112/blms.12641}.

\end{thebibliography}
\vspace{-0.75ex}
\end{document}